\documentclass[a4paper,11pt,dvipsnames]{article}

\usepackage[T1]{fontenc}
\usepackage[utf8]{inputenc}

\usepackage[english]{babel}

\usepackage{amssymb,amsthm,mathtools}
\usepackage{amsfonts,dsfont}
\usepackage[mathcal]{eucal}

\usepackage{lmodern}

\usepackage[text={13.5cm,21cm},centering,paperwidth=18.5cm,paperheight=26cm]{geometry}
\usepackage{csquotes,microtype,xcolor}
\usepackage{graphicx,siunitx,setspace}
\usepackage{algorithm,algpseudocode}

\usepackage{caption,subcaption}
\usepackage{natbib}
\usepackage[unicode,colorlinks,bookmarksnumbered]{hyperref}
\hypersetup{linkcolor=blue,citecolor=blue,urlcolor=blue}

\usepackage{orcidlink}

\usepackage[capitalise]{cleveref}
\crefname{section}{Sect.}{Sects.}
\crefname{figure}{Fig.}{Figs.}

\theoremstyle{plain}
\newtheorem{theorem}{Theorem}[section]
\newtheorem{hypothesis}{Hypothesis}
\newtheorem{proposition}[theorem]{Proposition}
\newtheorem{corollary}[theorem]{Corollary}

\theoremstyle{definition}
\newtheorem{definition}[theorem]{Definition}

\newtheorem{model}{Model}

\newcommand*{\N}{\ensuremath \mathbb{N}} 
\newcommand*{\R}{\ensuremath \mathbb{R}} 
\renewcommand*{\le}{\ensuremath \leqslant} 
\renewcommand*{\ge}{\ensuremath \geqslant} 
\newcommand*{\seg}[1]{\ensuremath \{1,\dots,#1\}} 
\newcommand*{\intd}[1]{\ensuremath \mathrm{d}#1} 
\newcommand*{\Esp}{\ensuremath \mathbb{E}} 
\newcommand*{\Prob}{\ensuremath \mathbb{P}} 
\newcommand*{\Exp}{\ensuremath \mathcal{E}} 
\newcommand*{\majsto}{\ensuremath \le_{\mathrm{st}}} 
\newcommand{\Break}{\State \textbf{break}} 
\newcommand*{\weightnorm}[1]{\ensuremath \Vert #1 \Vert_{\varepsilon}} 

\DeclareSIUnit\mRNA{mRNA}
\DeclareSIUnit\prot{prot}

\newcommand*{\degradop}{\ensuremath \mathcal{D}} 
\newcommand*{\creatop}{\ensuremath \mathcal{C}} 
\newcommand*{\burstop}{\ensuremath \mathcal{B}} 
\newcommand*{\commonjumprate}[1]{\ensuremath \lambda_{#1}^{\mathrm{1,2}}} 
\newcommand*{\jumpratecone}[1]{\ensuremath \lambda^1_{#1}} 
\newcommand*{\jumpratectwo}[1]{\ensuremath \lambda^2_{#1}} 

\newcommand{\ARNMP}{\ensuremath (Y(t))_{t \ge 0}} 

\newcommand*{\Comp}{\ensuremath (\CompL(t))_{t \ge 0}} 
\newcommand*{\CompL}{\ensuremath U} 
\newcommand*{\CompSmL}{\ensuremath u} 

\newcommand{\biokoni}{\ensuremath k_{\text{on},i}}
\newcommand*{\trueinteraction}{\ensuremath \sigma} 
\newcommand{\koni}{\ensuremath \alpha_i} 
\newcommand{\kon}{\ensuremath \alpha} 

\newcommand*{\rescalefunc}{\ensuremath \varphi} 
\newcommand*{\lipnorm}{\ensuremath \ell} 

\newcommand{\mindun}{\ensuremath d_1} 

\newcommand{\epsi}[1]{\ensuremath \varepsilon_{#1}} 
\newcommand*{\roverdun}{\ensuremath \kappa} 

\newcommand*{\fctgeneh}{\ensuremath G} 

\begin{document}

\title{\vspace{-18mm}\textbf{Quantitative ergodicity for gene regulatory networks with transcriptional bursting}}

\author{\normalsize Mathilde Gaillard\,\orcidlink{0009-0002-9520-3814} and Ulysse Herbach\,\orcidlink{0000-0002-0972-385X}}

\date{\normalsize Université de Lorraine, CNRS, Inria, IECL, F-54000 Nancy, France\\\small\texttt{\{mathilde.gaillard, ulysse.herbach\}@inria.fr}}

\maketitle

\begin{center}
\begin{minipage}{12.25cm}
\small\noindent\textbf{Abstract.}
We study the long-term behavior of two piecewise-deterministic Markov processes used to model stochastic gene regulatory networks with bursting dynamics. Under regularity assumptions on the jump rate, we prove the existence and uniqueness of the stationary distribution for an arbitrary number of interacting genes and an arbitrary strength of interaction. Using coupling methods, we also provide explicit upper bounds for the convergence to equilibrium in terms of Wasserstein distances.

\vspace{3.2mm}

\noindent\textbf{Keywords:} Gene regulatory networks, Stochastic gene expression, Ergodicity, Piecewise-deterministic Markov processes, Coupling, Wasserstein distance
\end{minipage}\medskip
\end{center}


\section{Introduction}

Piecewise-deterministic Markov processes (PDMPs) belong to a general class of stochastic models such that the randomness comes only from a jump mechanism. Several lines of work have studied their long-time behavior and characterized their ergodicity properties through the ergodicity of some particular discrete-time Markov chains (e.g., see~\citet{Costa2008} for a completely general PDMP and~\citet{Benaim2015} for slightly more specific models). If these results may be used to establish the existence and uniqueness of a stationary distribution, they fail to provide quantitative rates of convergence to the equilibrium. For convergence rates in total variation distance, a Foster-Lyapounov criterion~\citep{Meyn2009} can be used to guaranty the existence of a particular (but not explicit) rate (see for example~\citet{Bierkens2019}). To obtain convergence in the Wasserstein distance sense, coupling methods are commonly used, but explicit rates derived so far seem to have been restricted either to a subclass of PDMPs, the class of switched dynamical systems~\citep{Benaim2012}, or to some other very specific models~\citep{Malrieu2015,Chafai2010,Bardet2013}. The former refers to PDMPs with a state space \( \R^n \times E \) where \( E \) is a finite set, with the continuous component following a vector field that switches  when the discrete component jumps. The latter fall into a category of PDMPs with only one component, driven by a single flow and directly impacted by jumps. So far, for this last category of models, proofs of ergodicity obtaining convergence rates rely on constant jump rates~\citep{Malrieu2015} or analytical expression of the jump rate~\citep{Bardet2013,Chafai2010}. 

In this paper, we extend the method developed in~\citep{Benaim2012} for switched dynamical systems to some multivariate PDMPs with a single flow and unbounded trajectories. The resulting convergence rates are fully explicit. This methodology applies to non-constant jump rates and does not require an analytical expression. We focus here on two specific models formulated as PDMPs coming from molecular biology~\citep{Gaillard2025} and describing an arbitrary number of genes in interaction at the single-cell level. More precisely, in the following, we consider a network of \( n \) genes, possibly in interaction, where each gene \( i \in \seg{n} \) is described by its mRNA level \( M_i \) and its protein level \( P_i \). The dynamics of each quantity is given by
\begin{equation}\label{eq : biological MP model}
\left\{\begin{array}{rl}
M_i(t) &\xrightarrow{\biokoni} M_i(t) + \Exp \left( b_i \right) \vspace{1.5mm} \\
{M_i}'(t) &= -d_{0,i} M_i(t) \vspace{1.5mm} \\
{P_i}'(t) &= s_{1,i} M_i(t) - d_{1,i} P_i(t)
\end{array}\right.
\end{equation}
where \( \Exp(b_i) \) denotes the exponential distribution of parameter \( b_i \) and \( \biokoni \) is a function of proteins levels \( P(t) = \left( P_1(t), \dots, P_n(t) \right)\), encoding possible interactions between genes. The first item in \eqref{eq : biological MP model} means that with rate \( \biokoni \), the mRNA quantity jumps from its current value to a new (random) one, chosen by adding an exponential random variable to the current value. This reproduces an observed biological phenomenon called transcriptional bursting : mRNA copies are only produced, many at a time, in very short periods~\citep{Suter2011}. At the limit, these periods are modeled by instantaneous jumps called \emph{bursts}. From a more theoretical point of view, this model appears as a non-degenerate limit of the two-state model considered in~\citep{Herbach2017,Pajaro2017} in a specific parameters regime (see~\citep{Gaillard2025} for more details). It also emerged as the limit of continous-time Markov chains modelling chemical reactions with different time scales~\citep{Crudu2012, Chen2019}. 

While this model taking into account both mRNA and proteins is biologically relevant (in particular in reproducing scRNA-seq data~\citep{Ventre2023}), it remains mathematically complex. Regarding theoretical results, the following simplified version, where the mRNA layer is removed and bursts applied directly to proteins, is often considered 
\begin{equation} \label{eq : biological P model}
\left\{\begin{array}{rl}
\widehat{P}_i(t) &\xrightarrow{\biokoni} \widehat{P}_i(t) + \Exp(c_i) \vspace{1.5mm} \\
{\widehat{P}_i}{}'(t) &= -d_{1,i} \widehat{P}_i(t)
\end{array}\right.
\end{equation}
This simplified model is relevant when the degradation rate of mRNA is large relative to the degradation rate of proteins, namely in the regime where for all \( i \in \seg{n} \) \( d_{0,i} \gg d_{1,i} \). While this is not necessary for those models to be well defined, it is the regime in which the difference between proteins deterministic flows is small. To maintain a small difference when there is a burst, the optimal choice is to set 
\begin{equation} \label{eq_:_def_epsilon}
c_i = \frac{b_i d_{1,i}}{\epsi{i} s_{1,i}}, \quad \text{with} \quad \epsi{i} = \frac{d_{1,i}}{d_{0,i} - d_{1,i}}.
\end{equation}
Note that this can be done without loss of generality, since multiplying \( c_i \) by a factor is equivalent to dividing the proteins by the same factor and modifying the interaction function. Physical parameters used in \eqref{eq : biological MP model} and \eqref{eq : biological P model} are summarized in \cref{table : parameters}.

\begin{table}[ht]
\caption{Summary of the parameters considered in biological models \eqref{eq : biological MP model} and \eqref{eq : biological P model} for a fixed gene \( i \). The first column shows the notation used, the second the physical dimensions of the parameter, and the third its physical interpretation.}
\centering
\begin{tabular}{clr}
\hline
Notation & Units & Physical interpretation \\
\hline
\( d_{0,i} \) & \unit{\per\second} & mRNA degradation rate\\
\( d_{1,i} \) & \unit{\per\second} & protein degradation rate  \\
\( s_{1,i} \) & \unit{\prot}.\unit{\per\second}.\unit{\per\mRNA} & translation rate \\
\( 1 \slash b_i \) & \unit{mRNA} & mRNA burst size \\
\( 1  \slash c_i \) & \unit{\prot} & protein burst size \\
\hline
\end{tabular}
\label{table : parameters}
\end{table}

The aim of this paper is to show the ergodicity and provide convergence rates for these two models. Although proofs are made explicitly for these two specific PDMPs, the technique can be transposed to a wider class of bursty PDMPs with a few dynamic restrictions. The technique used here is an extension of the one developed in~\citep{Benaim2012}. As a result, \cref{thm : bound over wasserstein distance P-P} extends Theorem 3.4 in~\citep{Chen2019}. 

This paper is organized as follows. In \cref{section : notation and main results} we present the mathematical framework and the results. The \cref{section : discussion} is dedicated to discussion and numerical illustrations on the results. A conclusion and some perspectives for future work are given in \cref{section_:_conclusion_and_perspectives}. The proofs of results in \cref{section : notation and main results} can be found in \cref{section : Proofs Couplings P-P and MP-MP}.

\section{Notation and main results} \label{section : notation and main results}

This section is devoted to the main mathematical results. We first give some reminder of needed mathematical tools, then in \cref{subsection : mathematical framework} we introduce the mathematical framework. In \cref{subsection : quantitative ergodicity} we state the main theorems which are ergodicity results along with fully explicit convergence rates. 

\bigskip
 
In all this paper we consider only random variables \( X \) with a finite first moment, i.e.
\[ \Esp[ \vert X \vert ] < \infty. \]
We consider also the Wasserstein metric of order 1 on \( (\R^n, \Vert \cdot \Vert) \), where for all \( x = (x_1, \dots, x_n) \in \R^n \)
\[ \Vert x \Vert = \Vert x \Vert_1 = \sum_{i=1}^n \vert x_i \vert. \]
The Wasserstein metric between \( \mu \) and \( \nu \) two probability distributions on \( \R^n \) is defined by
\[ W_1 (\mu, \nu) = \inf \left\{ \Esp[ \Vert X - Y \Vert ] ; X \sim \mu, Y \sim \nu \right\}, \] 
where the infimum is taken over all couplings, namely every join distribution for \( (X,Y) \) with the right marginals. In our setting, another usefull norm is the weighted norm defined for all \( x = (x_1, \dots, x_n) \in \R^n \) by
\[ \weightnorm{x} = \sum_{i=1}^n \epsi{i} \vert x_i \vert, \]
where \( \epsi{1}, \dots, \epsi{n} \) are defined at \eqref{eq_:_def_epsilon}.

We consider the element-wise order on \( \R^n \), we write \( x \le y \) with \( x = (x_1,\dots, x_n) \) and \( y = (y_1, \dots, y_n) \) if for all \( i \in \seg{n} \), \( x_i \le y_i \). A function \( f : \R^n \longrightarrow \R \) is said to be increasing if 
\[ \forall x, y \in \R^n, \quad x \le y \Longrightarrow f(x) \le f(y). \]

We make also heavy use of the stochastic ordering which is a partial order on probability distributions. We write \( \mu \majsto \nu \) if
\[ \int f \intd{\mu} \le \int f \intd{\nu} \]
for every function \( f : \R^n \longrightarrow \R \) monotone increasing. 
 In this case we say that \( \nu \) is \emph{stochastically larger} than \( \mu \). 
To sometimes simplify the notation, we write it with random variables.
For example, for two random variables \( X \) and \( Y \), we write
\[ X \majsto Y, \]
if the distribution of \( Y \) is stochastically larger than the distribution of \( X \).
We intensively use the two following characterizations of the stochastic ordering. First, a stochastic inequality can be transformed into an inequality almost surely between random variables. Indeed, \( X \) is stochastically larger than \( Y \) if and only if there exists a coupling of \( (X, Y) \) such that \( X \le Y \) almost surely. In the following, if two distributions verify a stochastic inequality, we will often implicitly consider such a coupling for the associated random variables.
Secondly, we sometimes use that when \( X \) and \( Y \) are random variables on \( \R \), with cumulative distribution functions \( F_X \) and \( F_Y \), 
\[ X \majsto Y \Longleftrightarrow \forall t \in \R \quad F_X(t) \ge F_Y(t). \]

\subsection{Mathematical framework} \label{subsection : mathematical framework}

In order to simplify notation, we introduce a normalized version of the quantities defined at \eqref{eq : biological MP model} and \eqref{eq : biological P model}. For every \( i \in \seg{n} \), we set
\begin{align*}
X_{i}(t) &= \frac{d_{1,i} b_i}{s_{1,i} \epsi{i}} \widehat{P}_{i}(t), \\
Y_{i}(t) &= \frac{b_i}{\epsi{i}} M_{i}(t), \\
Z_{i}(t) &= \frac{d_{1,i} b_i}{s_{1,i} \epsi{i}} P_{i}(t),
\end{align*}
where \( \epsi{i} \) is defined at \eqref{eq_:_def_epsilon} and other parameters comes from \eqref{eq : biological MP model} and \eqref{eq : biological P model}.
We also set 
\[ \koni = \biokoni \circ \rescalefunc, \]
where, \( \rescalefunc \) is defined for \( x = (x_1, \dots, x_n) \in \R^n \) by
\[
\rescalefunc(x) = \left( \dfrac{s_{1,1} \epsi{1}}{d_{1,1} b_1} x_1, \dots , \dfrac{s_{1,n} \epsi{n}}{d_{1,n} b_n} x_n \right).
\]
Models corresponding to these new dimensionless quantities are as follow.
\begin{model}[Reduced protein-only model]\label{model : Simplified Mathematical Model}
\[
\left\{\begin{array}{rl}
X_i(t) &\xrightarrow{\koni(X(t))} X_i(t) + \Exp(1) \vspace{1.5mm} \\
{X_i}'(t) & = -d_{1,i} X_i(t)
\end{array}\right.
\]
\end{model}
\begin{model}[Complete mRNA-protein model]\label{model : Mathematical Bursty Model}
\[ 
\left\{\begin{array}{rl}
Y_i(t) &\xrightarrow{\koni(Z(t))} Y_{i}(t) + \Exp \big( \epsi{i} \big) \vspace{1.5mm} \\
{Y_i}'(t) &= -d_{0,i} Y_i(t) \vspace{1.5mm} \\
{Z_i}'(t) &= d_{1,i} (Y_i(t) - Z_i(t))
\end{array}\right.
\]
\end{model}
These forms highlight the fact that \( s_{1,i} \), \( b_i \) and \( c_i \) are just scaling constants. Remember that \( c_i \) does not appear in the normalization since we used the relation \eqref{eq_:_def_epsilon}. As their respective biological counterparts (\eqref{eq : biological MP model} and \eqref{eq : biological P model}), \cref{model : Mathematical Bursty Model} and \cref{model : Simplified Mathematical Model} are PDMPs. This means that the motion of these processes is characterized by a flow, namely a deterministic motion between jumps, a jump rate to control when jumps occur, and a transition kernel to draw a new position. In the simple case of one-dimensional \cref{model : Simplified Mathematical Model}, starting from \( x \) and until the instant of the first jump \( T \), we have
\[ X(t) = xe^{-d_1 t}. \]
The distribution of \( T \) is characterized by its survival function
\[
\Prob_{x} (T > t ) = \exp \left( - \int_0^t  \kon( xe^{-d_1 s}) \intd{s} \right).
\]
Finally, at time \( T \), the new position is drawn according to the kernel \( Q \) defined by
\[ Q(xe^{-d_1 T}, \cdot) = \delta_{xe^{-d_1 T}} + \Exp(1), \]
where \( \delta_y \) denotes the Dirac mass at \( y \). The motion restarts from this new point as before. PDMPs such as the ones considered in this paper, can be characterized by their infinitesimal generators (see~\citep{Malrieu2015, Benaim2015} for a more general overview). Since in this paper, we consider only PDMPs with the same three types of dynamics : degradation, creation and burst, we introduce the following notation. For every \( x = (x_1, \dots, x_d) \in \R^d \) and \( f \) regular enough, we define the degradation operator
\[ \degradop_{x_i} f(x) = - x_i \partial_{x_i} f(x), \]
the creation operator
\[ \creatop_{x_i}^{x_j} f(x) = x_j \partial_{x_i} f(x), \]
and finally the burst operator defined for a subset of at most two variables \( \{ x_i, x_j \} \) and some associated parameters \( \{ h_i, h_j \} \) by
\[ \burstop_{x_i, x_j}^{h_i, h_j} f(x) = \int_0^{+ \infty} (f(x + h_i s e_i + h_j s e_j) - f(x) ) e^{-s} \intd{s} \]
where \( e_1, \dots, e_d \) is the canonical basis of \( \R^d \). We may omit the parameters \( \{ h_i, h_j \} \) if they are equal to one and there is no ambiguity. To set an example, the infinitesimal generator of \cref{model : Simplified Mathematical Model} fully written is
\[
L_{\text{P}} f(x) = \sum_{i=1}^n \left( -d_{1,i} x_i \partial_{x_i} f(x) + \koni(x) \int_0^{+ \infty} (f(x + s e_i) - f(x)) e^{- s} \intd{s} \right),
\]
but we rather use the simplify version with our notations
\begin{equation}\label{eq : generator P n genes}
L_{\text{P}} = \sum_{i=1}^n d_{1,i} \degradop_{x_i} + \koni \burstop_{x_i}. 
\end{equation}
Similarly, the generator of \cref{model : Mathematical Bursty Model} is
\begin{equation}\label{eq : Générateur MP n gènes}
L_{\text{MP}} = \sum_{i=1}^n d_{1,i} \left( \degradop_{z_i} + \creatop_{z_i}^{y_i} \right) + d_{0,i} \degradop_{y_i} + \koni \burstop_{y_i}^{\epsi{i}^{-1}},
\end{equation}
where \( \koni \) is a function of vector \( z \) only. Such models will be called \emph{bursty} PDMPs, in reference to the transcriptional bursting phenomenon mentioned in the introduction. More broadly, we refer to PDMPs with only real components driven by a unique deterministic flow that is directly impacted by jumps. These jumps will also be referred to as \emph{bursts}.

\bigskip

In all the remaining of this paper, we will place ourselves under the following hypothesis.
\begin{hypothesis}\label{hyp : kon bounded and Lipschitz}
We assume that 
\[ 
\kon : \left|\begin{array}{lll}
\R^n & \longrightarrow & \R^n \\
x & \longmapsto & \left( \kon_1(x), \dots, \kon_n(x) \right)
\end{array}\right.
\]
is bounded and Lipschitz continuous.
\end{hypothesis}
More precisely, we assume that for each gene \( i \), \( \koni \) can be decomposed as
\[
\koni(x) = k_{0,i} + (k_{1,i} - k_{0,i}) (\trueinteraction_i \circ \rescalefunc) (x),
\]
where \( k_{0,i} \) and \( k_{1,i} \) are respectively the infimum and the supremum of \( \biokoni \), and \( \trueinteraction_i \circ \rescalefunc : \R^n \longrightarrow [0,1] \) is a \( \ell_i \)-Lipschitz continuous function. Thus using \cref{hyp : kon bounded and Lipschitz}, we immediately have that
\begin{equation}\label{eq:consequence_of_the_bounded_Lipschitz_assumption}
\Vert \kon(x) - \kon(z) \Vert \le r \left( 1 \wedge \lipnorm \Vert x - z \Vert \right), 
\end{equation}
where
\[ r = \sum_{i=1}^n k_{1,i} - k_{0,i} \quad \text{and} \quad \lipnorm = \frac{1}{r} \sum_{i=1}^n \ell_i. \]
The constant \( \lipnorm \) describes the aggregated strength of the interaction. Indeed, every Lipschitz constant \( \ell_i \) measures the extent to which a difference in protein levels of gene \( i \) may results in an difference in jump rate. From a mathematical point of view, the Lipschitz assumption is sufficient to ensure that PDMPs considered here are well defined, namely non explosive (see~\citet{Chen2019} for the proof). Nonetheless, it is biologically reasonable to assume that \( \kon \) is bounded. 

The second hypothesis that we will consider throughout this paper is the following.
\begin{hypothesis}\label{hyp : degradation rates}
For every gene \( i \in \seg{n} \), the degradation rate of mRNA molecules is greater than the degradation rate of proteins, that is :
\[ d_{0,i} > d_{1,i}. \]
\end{hypothesis}
This hypothesis comes from biological experiences showing that proteins are more stable than mRNA molecules~\citep{Schwanhausser2011}. Our proof scheme, strongly relies on this hypothesis. In this setting, the solutions of the ODEs systems describing the deterministic dynamics between jumps, are given by
\begin{equation}\label{eq_:_sol_analytique_EDO_modele_P}
X_{i}(t) = X_{i}(0) e^{-d_{1,i}t}
\end{equation}
for \cref{model : Simplified Mathematical Model}, and by
\begin{equation}\label{eq_:_sol_analytique_EDO_modele_MP}
\left\{ \begin{array}{ll}
Y_{i}(t) & = Y_{i}(0) e^{-d_{0,i}t} \\
Z_{i}(t) & = Z_{i}(0) e^{-d_{1,i}t} + \epsi{i} Y_{i}(0) \left( e^{-d_{1,i}t} - e^{-d_{0,i}t} \right)
\end{array} \right.
\end{equation}
for \cref{model : Mathematical Bursty Model}.

\subsection{Quantitative ergodicity} \label{subsection : quantitative ergodicity}

In this section we present ergodicity results for both \cref{model : Simplified Mathematical Model} and \cref{model : Mathematical Bursty Model} along with their respective explicit rates of convergence. The proofs of the results in this section can be found in \cref{section : Proofs Couplings P-P and MP-MP}.

\bigskip

In the following of this section we will need the next notation.
\begin{definition}\label{def_:_notation_measures}
For \( i \in \{\emptyset, 1, 2\} \) we denote by \( \mu^i(0) \) the distribution of \( X^i(0) \), and by \( \mu^i(t) \) the distribution of \( X^i(t) \) starting from \( \mu^i(0) \) with the dynamics given  by \eqref{eq : generator P n genes}. Similarly, we also denote by \( \nu^i(0) \) (resp. \( \eta^i(0) \)) the distribution of \( Z^i(0) \) (resp. \( Y^i(0) \)). Finally, \( \nu^i(t) \) (resp. \( \eta^i(t) \)) stands for the distribution of \( Z^i(t) \) (resp. \( Y^i(t) \)) starting from \( \nu^i(0) \) (resp. \( \eta^i(0) \)), the whole dynamics of \( (Y^i(t), Z^i(t)) \) being given by \eqref{eq : Générateur MP n gènes}.
\end{definition}
Before stating the main results of this paper, we define for \( u \in \R_+ \), the function
\begin{equation} \label{eq_:_proba_arret_des_sauts_processus_compagnon}
p^*(u) = e^{- \roverdun (1 \wedge \lipnorm (u \vee \roverdun))} \left( \frac{1}{1 \vee \lipnorm (u \vee \roverdun)} \right)^{\roverdun}
\end{equation}
where
\begin{equation}
\roverdun = \frac{r}{\mindun}, \quad \mindun = \min_{1 \le i \le n} d_{1,i}.
\end{equation}
and where for every \(x, y \in \R \)
\[ x \wedge y = \min (x, y), \quad  x \vee y  = \max(x,y). \]
It is easy to see that for every \( u \in \R_+ \), \( p^*(u) \) defines a probability. Roughly speaking, given a value of \( \roverdun \), a value of \( \lipnorm \) and an initial expectation \( u \), \( p^*(u) \) returns the probability of getting (at some random times), the perfect synchronization of coordinates in the couplings constructed in \cref{section : Proofs Couplings P-P and MP-MP}. However, since the couplings will be introduced later on, a value \( p^*(u) \) must be interpreted as a measure of synchronization between \( \mu^1(t) \) and \( \mu^2(t) \) (or \( \nu^1(t) \) and \( \nu^2(t) \)). The smaller \( p^*(u) \) is, the longer it takes for \( \mu^1(t) \) and \( \mu^2(t) \) to be close (in the Wasserstein distance sense). It is clear that \( p^* \) is a non-increasing function of \( u \), as well as a non-increasing function of \( \roverdun \) and \( \lipnorm \). This is illustrated in \cref{fig_:_p_star}. Intuitively, a large initial distance between probability measures slows down synchronization as well as strong interaction between genes. Indeed, as we already mentioned, \( \lipnorm \) is one measurement of the strength of interaction between genes, but it appears here that it is also the case for \( \roverdun \). When \( \roverdun \) is large, either \( r \) is large and bursts are frequent, or \( \mindun \) is small and protein are degraded slowly, and both cases result in larger synchronization time. An other useful quantity regarding the strength of interaction is 
\begin{equation}\label{eq_:_tau}
\tau = r \wedge \mindun,
\end{equation}
that is the value of the slowest dynamic rate, either burst frequency or protein degradation.

\begin{figure}[ht]
\centering\includegraphics[width=\textwidth]{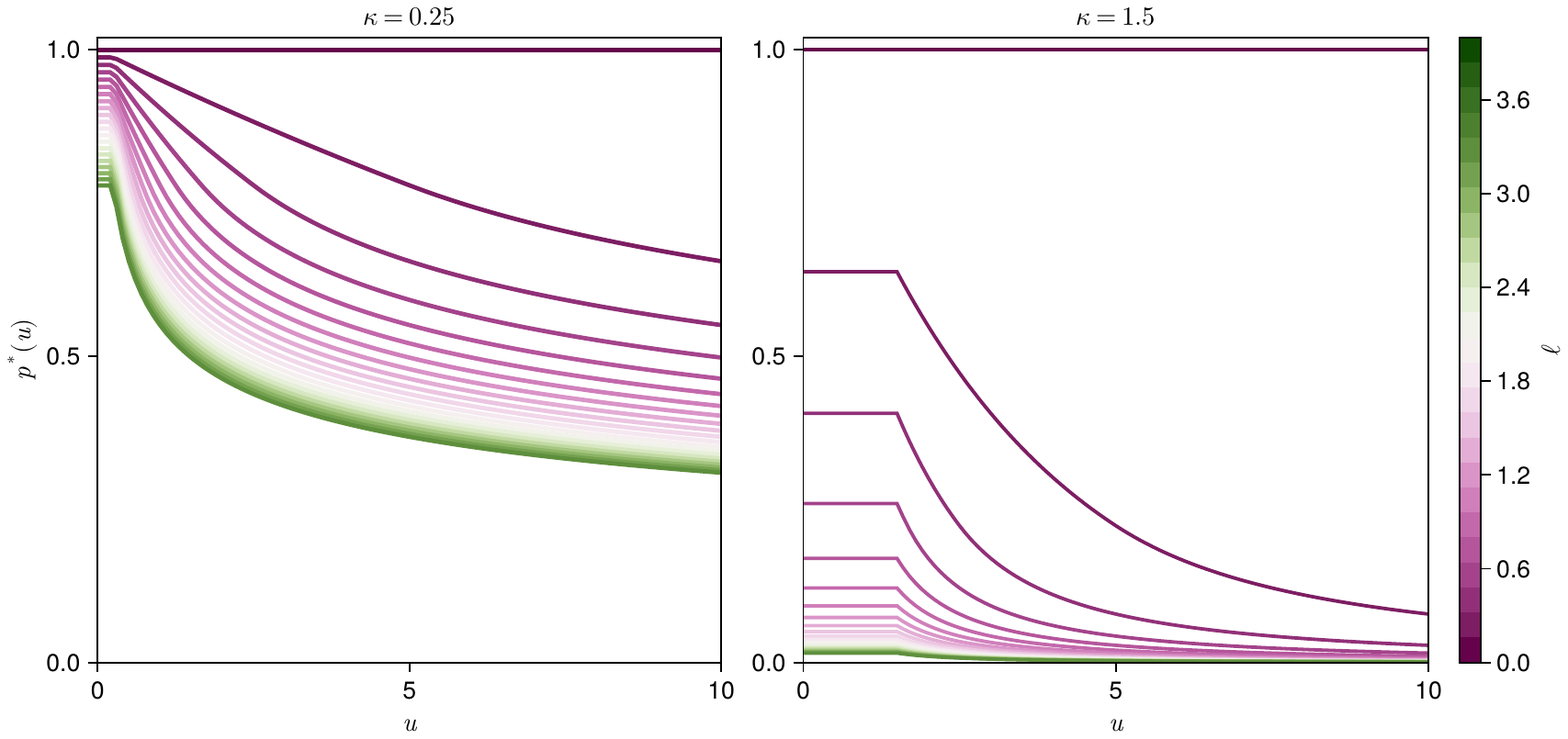}
\caption{Illustration of the function \( p^* \) defined at \eqref{eq_:_proba_arret_des_sauts_processus_compagnon} for different parameter values.}
\label{fig_:_p_star}
\end{figure}

We are now able to state our result in the case of \cref{model : Simplified Mathematical Model}.

\begin{theorem} \label{thm : bound over wasserstein distance P-P}
Given \( \mu^1(0) \) and \( \mu^2(0) \) and set 
\[ W_1(0) = W_1(\mu^1(0), \mu^2(0)) \quad \text{and} \quad p^* = p^*(W_1(0)). \] 
Then for all \( t \ge 0 \),
\[ W_1( \mu^1(t), \mu^2(t)) \le \left( W_1(0) \vee \kappa  +  \gamma e^{-1}(W_1(0) + \tau (1 - p^*)) t \right) e^{ - \gamma t} \]
where
\[ \gamma = \frac{p^* \tau \mindun}{p^* \tau + \mindun}, \]
and \( \tau \) is defined at \eqref{eq_:_tau}.
\end{theorem}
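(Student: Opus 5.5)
We construct a Markovian coupling of two copies \(X^1, X^2\) of \cref{model : Simplified Mathematical Model}, in the spirit of~\citep{Benaim2012}, and bound \(\Esp\Vert X^1(t)-X^2(t)\Vert\). Start from an optimal coupling of \((X^1(0),X^2(0))\) for \(W_1\). Coordinatewise, gene \(i\) of both copies jumps simultaneously with rate \(\koni(X^1)\wedge\koni(X^2)\), adding the \emph{same} \(\Exp(1)\) variable to both. Only copy \(1\) jumps with rate \((\koni(X^1)-\koni(X^2))^+\), and only copy \(2\) with the symmetric rate. Common jumps leave \(X^1_i-X^2_i\) unchanged. Between jumps the difference contracts deterministically: \(|X^1_i-X^2_i|(t)=|X^1_i-X^2_i|(0)e^{-d_{1,i}t}\), which is at least as fast as \(e^{-\mindun t}\). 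Hence \(\Vert X^1-X^2\Vert\) only grows at single (non-common) jumps. Each such jump adds an \(\Exp(1)\) amount, with mean \(1\). By \eqref{eq:consequence_of_the_bounded_Lipschitz_assumption}, the total rate of single jumps is at most \(r(1\wedge\lipnorm\Vert X^1-X^2\Vert)\).

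The key step is to dominate \(D(t)=\Vert X^1(t)-X^2(t)\Vert\) by a one-dimensional companion process \(\Comp\). This process decays at rate \(\mindun\), jumps by \(\Exp(1)\) with rate \(r(1\wedge \lipnorm \CompL)\), and starts at \(\CompL(0)=D(0)\). Stochastic domination \(D(t)\majsto \CompL(t)\) follows because the jump rate is increasing in the state and the flow is monotone. We then show that with probability at least \(p^*(W_1(0))\) the companion makes no jump at all, after which it decays deterministically. Conditioning on \(\CompL(0)=u\), the probability of no jump is
\[
\exp\Bigl(-\int_0^\infty r(1\wedge\lipnorm u e^{-\mindun s})\intd{s}\Bigr).
\]
We compute this integral by splitting at the time when \(\lipnorm ue^{-\mindun s}=1\). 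This yields \(e^{-\roverdun}(\lipnorm u)^{-\roverdun}\) when \(\lipnorm u\ge1\), and \(e^{-\roverdun\lipnorm u}\) otherwise. Replacing \(u\) by \(u\vee\roverdun\) and averaging over \(u\) with Jensen's inequality (the relevant map is convex in \(u\)) gives a bound in terms of \(\Esp u=W_1(0)\), which reproduces the formula for \(p^*\). The term \(\vee\roverdun\) handles the fact that the companion's expectation may first rise up to the order of the equilibrium level \(\roverdun=r/\mindun\).

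We then use a renewal/geometric-trials argument. Successive trials each succeed (no more jumps) with probability at least \(p^*\). A failed trial lasts a time whose law is dominated by an exponential of rate of order \(\tau=r\wedge\mindun\). A successful phase gives contraction \(e^{-\mindun t}\). The overall decay rate comes from the Laplace transform of a geometric sum: a time \(\mathrm{Geom}(p^*)\times\Exp(\tau)\) is \(\Exp(p^*\tau)\), and combining it with the \(\Exp(\mindun)\)-type contraction gives the harmonic combination \(\gamma=p^*\tau\mindun/(p^*\tau+\mindun)\), via \(1/\gamma=1/(p^*\tau)+1/\mindun\). The prefactor is obtained as follows. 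The first moment of \(\CompL\) stays below \(W_1(0)\vee\roverdun\), which gives the term \((W_1(0)\vee\kappa)e^{-\gamma t}\). The linear-in-\(t\) term comes from the convolution of two exponentials with close rates, using \(\sup_s s e^{-s}=e^{-1}\) to get \(\gamma e^{-1}(W_1(0)+\tau(1-p^*))t\,e^{-\gamma t}\).

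The main obstacle is the restart step. After a failed trial, the companion's state is random and larger, so \(p^*\) has to be uniform across restarts. I would handle this by showing that \(\Esp\,\CompL\) at restart times stays bounded by \(W_1(0)\vee\roverdun\), using the generator inequality \(L\,u\le -\mindun u+r\). Since \(p^*\) is non-increasing, the same lower bound \(p^*(W_1(0))\) then applies at every trial. Finally we conclude with \(W_1(\mu^1(t),\mu^2(t))\le \Esp D(t)\le\Esp\CompL(t)\).
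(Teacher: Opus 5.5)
Your architecture is the paper's: the same coupling and companion process $\Comp$ (decay $\mindun$, rate $r(1\wedge\lipnorm u)$), the same no-jump probability and Jensen step, a geometric number of trials with $\Exp(\tau)$-dominated finite waiting times, and the harmonic $\gamma$, which the paper obtains by splitting on $\{H<\beta t\}$ with $\beta=\mindun/(p^*\tau+\mindun)$. The genuine gap is the restart step you single out, and the fix you propose does not work.

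The generator inequality controls $\Esp[\CompL(t)]$ only at deterministic times. Trial $k+1$ instead needs a lower bound on $\Esp[q(\CompL(S_k))\mid S_k<\infty]$. Here $S_k$ is the $k$-th jump time, and $q(u)=e^{-\roverdun\lipnorm u}$ for $\lipnorm u\le 1$, $q(u)=e^{-\roverdun}(\lipnorm u)^{-\roverdun}$ otherwise, so that $p^*(w)=q(w\vee\roverdun)$. We have $\CompL(S_k)=\CompL(S_k^-)+E_k$, where $E_k\sim\Exp(1)$ is independent of $\{S_k<\infty\}$, and $q$ is non-increasing. Hence the conditional mean of $\CompL(S_k)$ is at least $1$. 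The conditional success probability is at most $\Esp[q(E)]$ with $E\sim\Exp(1)$, whatever $W_1(0)$ is.

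Concretely, take $\roverdun=0.01$, $\lipnorm=100$, $\CompL(0)=\roverdun$. Then $p^*=e^{-0.01}\approx 0.990$ but $\Esp[q(E)]\approx 0.951$, so $\Prob(N\ge 2)\ge (1-e^{-0.01})\times 0.049>(1-p^*)^2$. So neither ``every trial succeeds with probability at least $p^*(W_1(0))$'' nor $N\majsto\mathrm{Geometric}(p^*)$ holds. Conditioning on a jump also size-biases the pre-jump state: if $\CompL(0)$ puts mass $w/M$ on $M$ and the rest on $0$, then $\Esp[\CompL(S_1)\mid S_1<\infty]$ grows linearly in $M$ while $\Esp[\CompL(0)]=w$. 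So no bound on the post-jump mean in terms of $W_1(0)\vee\roverdun$ is available at all.

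The paper's proof takes exactly the same step. It applies the deterministic-time bound of \cref{prop_:_borne_uniforme_constante_esperance_U(t)} ``for any inter-arrival time'', and \cref{algo1} tacitly needs $p_U\ge p^*$ along the whole path. So mirroring it does not close the gap. You would need a genuine uniform lower bound on the probability of no further jump from post-jump states, one that accounts for the $\Exp(1)$ burst. Such a bound is in general below $p^*(W_1(0))$, so the stated $\gamma$ does not follow from this route as it stands.

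Three smaller points:
\begin{itemize}
\item Apply Jensen to $q(\CompL(0))$ first, and only then pass to $W_1(0)\vee\roverdun$ by monotonicity. Replacing $u$ by $u\vee\roverdun$ before averaging gives $q(\Esp[\CompL(0)\vee\roverdun])$, which is on the wrong side.
\item $D(t)$ is not Markov, so its domination must come from an explicit joint construction, not a monotone-comparison theorem. In that construction $\CompL$ jumps with the same mark at each unilateral jump, plus extra jumps at rate $r(1\wedge\lipnorm\CompL)-\Vert\kon(X^1)-\kon(X^2)\Vert\ge 0$.
\item The prefactor is asserted rather than derived. It comes from $\Esp[\CompL(t)\mathds{1}_{\{H\ge\beta t\}}]\le e^{-s\beta t}\Esp[\CompL(t)e^{sH}]$ optimized over $s\in[0,p^*\tau)$. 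That optimization requires $\gamma t\ge 1$ and, carried out, produces a factor $e\,\gamma t$, not $e^{-1}\gamma t$.
\end{itemize}
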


We have a similar result for \cref{model : Mathematical Bursty Model}, but with a \emph{different} initial condition. 

\begin{theorem} \label{thm : bound over wasserstein distance MP-MP}
Given \( \nu^i(0) \) and \( \eta^i(0) \) for every \( i \in \{1, 2\} \) and set 
\[ w_0 = W_1(\nu^1(0), \nu^2(0)) + \widetilde{W}_1(\eta^1(0), \eta^2(0)) \quad \text{and} \quad p^* = p^*(w_0), \] 
where \( \widetilde{W}_1 \) stands for the Wasserstein metric on \( (\R^n, \weightnorm{\cdot}) \)
Then for all \( t \ge 0 \),
\[ W_1( \nu^1(t), \nu^2(t)) \le \left( w_0 \vee \roverdun + \gamma e^{-1}(w_0 + \tau (1 - p^*)) t \right) e^{- \gamma t} \]
where 
\[ \gamma = \frac{p^* \tau \mindun}{p^* \tau + \mindun}, \]
and \( \tau \) is defined at \eqref{eq_:_tau}.
\end{theorem}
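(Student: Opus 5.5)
The plan is to reduce the MP–MP case to the same coupling scheme used for \cref{thm : bound over wasserstein distance P-P}, with the weighted norm absorbing the mRNA layer.

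\emph{Coupling and Lyapunov-type functional.} I would couple two copies $(Y^1,Z^1)$ and $(Y^2,Z^2)$ of \cref{model : Mathematical Bursty Model}, started from optimal couplings of $(\nu^1(0),\nu^2(0))$ for $W_1$ and of $(\eta^1(0),\eta^2(0))$ for $\widetilde W_1$. For each gene $i$, bursts occur simultaneously, with the same exponential size, at the common rate $\koni(Z^1)\wedge\koni(Z^2)$. The excess rate $|\koni(Z^1)-\koni(Z^2)|$ produces a burst in only one copy. The quantity to track is
\[ D(t)=\Vert Z^1(t)-Z^2(t)\Vert+\weightnorm{Y^1(t)-Y^2(t)}, \]
so that $\Esp[D(0)]=w_0$.

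\emph{Generator computation.} Between jumps, \eqref{eq_:_sol_analytique_EDO_modele_MP} gives $\frac{\intd{}}{\intd{t}}|Z_i^1-Z_i^2|\le -d_{1,i}|Z_i^1-Z_i^2|+d_{1,i}|Y_i^1-Y_i^2|$. It also gives $\frac{\intd{}}{\intd{t}}\epsi{i}|Y_i^1-Y_i^2|=-d_{0,i}\epsi{i}|Y_i^1-Y_i^2|$. Since $d_{0,i}\epsi{i}=d_{1,i}(1+\epsi{i})$, the creation term $d_{1,i}|\Delta Y_i|$ is exactly cancelled by part of the mRNA decay, leaving a total drift of at most $-d_{1,i}(|\Delta Z_i|+\epsi{i}|\Delta Y_i|)\le -\mindun D$. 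Here \cref{hyp : degradation rates} is needed, to make $\epsi{i}>0$. An unsynchronized burst of size $\Exp(\epsi{i})$ in $Y$ raises $\epsi{i}|\Delta Y_i|$ by $1$ on average. By \eqref{eq:consequence_of_the_bounded_Lipschitz_assumption} this contributes at most $r(1\wedge\lipnorm\Vert\Delta Z\Vert)\le r(1\wedge \lipnorm D)$. Hence $\frac{\intd{}}{\intd{t}}\Esp D\le-\mindun\Esp D+r$, which yields $\Esp D(t)\le (w_0\vee\roverdun)$ for all $t$. This is exactly the structure of the P–P case, with $D$ in place of $\Vert X^1-X^2\Vert$ and with the same constants $\roverdun$, $\lipnorm$, $\tau$.

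\emph{Synchronization and conclusion.} Next I would reproduce the companion-process argument of the P–P proof. I would dominate $D$ stochastically by a one-dimensional bursty process $\CompL$ that decays at rate $\mindun$ and jumps by $\Exp(1)$ at rate $r(1\wedge\lipnorm\CompL)$. I would then show that, with probability at least $p^*(w_0)$, no unsynchronized burst occurs after a suitable random time, so that $D$ decays exponentially at rate $\mindun$ from then on. The geometric renewal over attempts, each taking time of order $1/\tau$, gives the rate $\gamma=p^*\tau\mindun/(p^*\tau+\mindun)$ and the polynomial prefactor $\gamma e^{-1}(w_0+\tau(1-p^*))t$. Finally, $W_1(\nu^1(t),\nu^2(t))\le\Esp\Vert Z^1-Z^2\Vert\le\Esp D(t)$.

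The main obstacle is the domination step. Jumps of $D$ come from $Y$, while the jump rate depends only on $\Vert\Delta Z\Vert\le D$. One has to check that the monotone comparison with $\CompL$, and the bound $p^*$, still hold when $\lipnorm\Vert\Delta Z\Vert$ is replaced by the larger $\lipnorm D$. I would try to make this work using the monotonicity of $u\mapsto r(1\wedge \lipnorm u)$ and the fact that the $Z$-component is driven by $Y$.
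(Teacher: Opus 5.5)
Your outline follows the paper's proof. It uses the same coupling of the two copies: equal-size common bursts at rate $\koni(Z^1)\wedge\koni(Z^2)$, and unilateral bursts at the excess rate. It also uses the same functional $D$ (the left-hand side of \eqref{eq : maj diff MP-MP}), the same companion process, and the same closing chain $W_1(\nu^1(t),\nu^2(t))\le\Esp\Vert Z^1(t)-Z^2(t)\Vert\le\Esp D(t)\le\Esp\CompL(t)$, finished by \cref{prop : maj expectation companion process}. Your drift computation via $d_{0,i}\epsi{i}=d_{1,i}(1+\epsi{i})$ is correct and is the infinitesimal form of the paper's explicit-solution argument. However, you leave open the domination of $D$ by $\CompL$. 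That step is the model-specific content of the paper's proof (\cref{prop : maj diff MP-MP} together with the generator \eqref{eq_:_generator_MP_MP_U}), so as written your argument is incomplete at its only step not inherited from the P--P case. It is also not a generic stochastic-comparison statement. $D$ is not Markov, and its jump rate $\Vert\kon(Z^1)-\kon(Z^2)\Vert$ is not a function of $D$, so an explicit joint construction is needed.

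\emph{The missing idea: put $\CompL$ inside the Markov coupling and prove the domination pathwise.}

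\emph{Construction.} Whenever a unilateral burst of gene $i$ moves $Y^1_i$ or $Y^2_i$ by $\epsi{i}^{-1}H$ with $H\sim\Exp(1)$, let $\CompL$ jump by the same $H$. In addition, give $\CompL$ independent $\Exp(1)$ jumps at the complementary rate $r(1\wedge\lipnorm\CompL)-\Vert\kon(Z^1)-\kon(Z^2)\Vert$. This rate is nonnegative as long as $\Vert Z^1-Z^2\Vert\le D\le\CompL$, by \eqref{eq:consequence_of_the_bounded_Lipschitz_assumption} and the monotonicity you mention. This is the only place where $\lipnorm\Vert Z^1-Z^2\Vert$ is replaced by something larger, and it costs nothing.

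\emph{Consequence for $p^*$.} The marginal law of $\CompL$ is then exactly that of the companion process: decay at rate $\mindun$ and $\Exp(1)$ jumps at rate $r(1\wedge\lipnorm\CompL)$. So $p^*$, the geometric bound on the number of jumps, and the $\Exp(\tau)$ bound on waiting times all apply verbatim. Nothing about $p^*$ needs rechecking, since it depends only on the law of $\CompL$.

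\emph{Induction over jump times.} The inequality $D\le\CompL$ propagates as follows.
\begin{itemize}
\item Along the flow, $D(t)\le e^{-\mindun t}D(0)$ by your drift bound, while $\CompL(t)=e^{-\mindun t}\CompL(0)$.
\item Common bursts change neither $D$ nor $\CompL$.
\item A unilateral burst raises $D$ by at most $H$ and $\CompL$ by exactly $H$.
\item The extra jumps only raise $\CompL$.
\end{itemize}
Starting from $\CompL(0)=D(0)$, this gives $\Esp D(t)\le\Esp\CompL(t)$, and the theorem follows as in the paper.

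One caveat you share with the paper: setting $\Esp D(0)=w_0$ requires gluing an optimal coupling of the $Z$-marginals with an optimal coupling of the $Y$-marginals. This is possible, for instance, when $Y^i(0)$ and $Z^i(0)$ are independent, but not for arbitrary joint initial laws.
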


A classical result states that the set of probability measures with a finite first moment is a complete metric space for the Wasserstein distance \( W_1 \) (see \citet{Villani2009}). Thus we can deduce from the \cref{thm : bound over wasserstein distance P-P} and \cref{thm : bound over wasserstein distance MP-MP} that both \cref{model : Simplified Mathematical Model} and \cref{model : Mathematical Bursty Model} are ergodic.

\begin{corollary}
\cref{model : Simplified Mathematical Model} admits a unique stationary distribution denoted by \( \mu_{\infty} \). Moreover, with notation of \cref{thm : bound over wasserstein distance P-P} we have that for all \( t \ge 0 \)
\[ W_1( \mu(t), \mu_{\infty}) \le \left( W_1(0) \vee \kappa  +  \gamma e^{-1}(W_1(0) + \tau (1 - p^*)) t \right) e^{ - \gamma t}. \]
\end{corollary}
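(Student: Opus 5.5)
Since \cref{thm : bound over wasserstein distance P-P} already gives a contraction in \( W_1 \) for any two initial laws, the corollary follows from the standard Banach-type argument in the complete space \( \mathcal{P}_1(\R^n) \) of probability measures with finite first moment (see \citet{Villani2009}). Denote by \( (P_t)_{t\ge0} \) the Markov semigroup of \cref{model : Simplified Mathematical Model}, acting on measures, so that \( \mu(t) = \mu(0)P_t \). First, I would check that \( P_t \) preserves \( \mathcal{P}_1 \). Since \( \koni \le k_{1,i} \), each coordinate is dominated by a process with constant burst rate and linear decay, so \( \Esp[X_i(t)] \le \Esp[X_i(0)] + k_{1,i}/d_{1,i} \). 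Hence first moments stay finite, uniformly in \( t \).

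\textbf{Existence.} Fix any \( \mu(0) \in \mathcal{P}_1 \), for instance \( \delta_0 \). For \( s \ge 0 \), apply \cref{thm : bound over wasserstein distance P-P} to the pair \( \mu^1(0)=\mu(0) \), \( \mu^2(0)=\mu(0)P_s \). By the Markov property, \( \mu^2(t) = \mu(t+s) \), so \( W_1(\mu(t),\mu(t+s)) \) is bounded by the right-hand side with initial distance \( W_1(\mu(0),\mu(0)P_s) \). The key point is that this bound must be uniform in \( s \). The initial distance is bounded by \( \Esp\Vert X(0)\Vert + \Esp\Vert X(s)\Vert \le 2\Esp\Vert X(0)\Vert + \kappa' \), with \( \kappa' = \sum_i k_{1,i}/d_{1,i} \), uniformly in \( s \).

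Since \( p^* \) is non-increasing in its argument, \( p^*(W_1(0)) \ge p^*(C) > 0 \) for a uniform bound \( C \). Then \( \gamma \), which increases with \( p^* \), is bounded below by a positive constant, and \( 1-p^* \le 1 \). So the right-hand side is at most \( (A+Bt)e^{-\gamma_C t} \to 0 \) uniformly in \( s \). Therefore \( (\mu(t))_{t} \) is Cauchy in \( W_1 \) and converges to some \( \mu_\infty \in \mathcal{P}_1 \).

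\textbf{Invariance.} To show \( \mu_\infty P_h = \mu_\infty \), I would apply the theorem again with the pair \( \mu_\infty \) and \( \mu(t) \), evolved for time \( h \). This gives \( W_1(\mu_\infty P_h, \mu(t+h)) \le (W_1(\mu_\infty,\mu(t)) \vee \kappa + \dots)e^{-\gamma h} \). The \( \vee\kappa \) term means this is not a plain Lipschitz contraction at fixed \( h \), so it cannot be used directly. This is the main obstacle.

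\emph{Invariance, first attempt.} I would instead use Feller continuity of \( P_h \) together with uniform integrability. The flow is continuous, \( \kon \) is Lipschitz, and the burst kernel is translation-type. So a synchronous coupling over \( [0,h] \) gives \( W_1(\nu P_h,\nu' P_h) \le C_h W_1(\nu,\nu') \) for a constant \( C_h \), for instance \( C_h = e^{c h} \) via Grönwall on the coupled jump rates using \eqref{eq:consequence_of_the_bounded_Lipschitz_assumption}. Then \( \mu(t+h) = \mu(t)P_h \to \mu_\infty P_h \), while also \( \mu(t+h) \to \mu_\infty \), hence \( \mu_\infty P_h = \mu_\infty \).

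\emph{Invariance, fallback.} If that Lipschitz estimate is delicate, I would use the \( \delta \)-argument: take \( t \) large so that \( W_1(\mu_\infty, \mu(t)) \) is small, then let \( h \to \infty \) in the theorem's bound, which still decays to \( 0 \). This gives invariance along \( h \to \infty \), and the semigroup property then yields it for all \( h \).

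\textbf{Uniqueness and the rate.} If \( \mu_\infty' \) is another stationary law in \( \mathcal{P}_1 \), the theorem applied to \( \mu_\infty, \mu_\infty' \) gives \( W_1(\mu_\infty,\mu_\infty') \le (\cdots)e^{-\gamma t} \to 0 \), so the two coincide. Finally, applying \cref{thm : bound over wasserstein distance P-P} with \( \mu^1(0)=\mu(0) \) and \( \mu^2(0)=\mu_\infty \), we have \( \mu^2(t)=\mu_\infty \) by stationarity, which is exactly the stated bound, with \( W_1(0)=W_1(\mu(0),\mu_\infty) \).
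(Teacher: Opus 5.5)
Your proof is correct and follows the paper's route: the paper only invokes completeness of \( (\mathcal{P}_1, W_1) \) together with \cref{thm : bound over wasserstein distance P-P} and leaves the rest implicit. Your four steps supply exactly those missing details: the uniform-in-\( s \) Cauchy bound through \( p^*(W_1(0)) \ge p^*(C) > 0 \), the fixed-time estimate \( W_1(\nu P_h, \nu' P_h) \le e^{(r\ell - d_1)^+ h}\, W_1(\nu,\nu') \) that gives invariance despite the \( \vee \kappa \) term, uniqueness, and the rate via \( \mu^2(0) = \mu_\infty \). Your ``fallback'' for invariance would not work on its own, because passing from \( \mu_\infty P_{h'} \to \mu_\infty \) as \( h' \to \infty \) to \( \mu_\infty P_h = \mu_\infty \) again needs continuity of \( P_h \), so the Lipschitz estimate of your first attempt is genuinely required.
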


\begin{corollary}
The protein coordinate of \cref{model : Mathematical Bursty Model} admits a unique stationary distribution denoted \( \nu_{\infty} \). Moreover, with notation of \cref{thm : bound over wasserstein distance MP-MP} we have that for all \( t \ge 0 \)
\[ W_1( \nu(t), \nu_{\infty}) \le \left( w_0 \vee \roverdun + \gamma e^{-1}(w_0 + \tau (1 - p^*)) t \right) e^{- \gamma t}. \]
\end{corollary}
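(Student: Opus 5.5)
The statement immediately above is the second corollary: existence and uniqueness of \( \nu_\infty \) for the protein coordinate, together with the quantitative bound. I would derive it from \cref{thm : bound over wasserstein distance MP-MP} by the standard Cauchy/contraction argument in the complete space of probability measures with finite first moment, equipped with \( W_1 \). One subtlety is that the protein coordinate alone is not Markov. So I would first work with the full process \( (Y(t),Z(t)) \) on \( \R^n\times\R^n \), whose law is determined by the initial pair \( (\eta(0),\nu(0)) \). I would then deduce uniqueness of the stationary law of the pair, and define \( \nu_\infty \) as its \( Z \)-marginal.

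\textbf{Existence.} Fix an initial law \( (\eta(0),\nu(0)) \) with finite first moments. For \( s\ge 0 \), the law of \( (Y(s),Z(s)) \) is another admissible initial condition, and by the Markov property the process started from it has \( Z \)-law \( \nu(t+s) \) at time \( t \). Applying \cref{thm : bound over wasserstein distance MP-MP} with \( \nu^1(0)=\nu(0) \), \( \nu^2(0)=\nu(s) \), and likewise for \( \eta \), gives
\[ W_1(\nu(t),\nu(t+s)) \le \bigl( w_0(s)\vee\roverdun + \gamma e^{-1}(w_0(s)+\tau)t\bigr)e^{-\gamma(s) t}. \]
To make this uniform in \( s \), I need \( \sup_s w_0(s)<\infty \), that is, uniform first-moment bounds for \( Y(s) \) and \( Z(s) \). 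These follow from Dynkin's formula with \( f(y,z)=y_i \) and \( f(y,z)=z_i \), using that \( \kon \) is bounded. This gives \( \frac{d}{dt}\Esp[Y_i]\le -d_{0,i}\Esp[Y_i]+k_{1,i}\epsi{i}^{-1} \), and similarly for \( Z_i \). Since \( p^* \) is non-increasing in \( u \), \( \gamma \) is then bounded below uniformly. Hence \( (\nu(t)) \) is \( W_1 \)-Cauchy, and the same argument applied to the \( Y \)-coordinate, or to the joint law, gives a limit of the pair. Feller continuity of the semigroup, which I would check from the Lipschitz rate by a synchronous coupling over a short time, shows that the limit is stationary.

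\textbf{Uniqueness and the bound.} If there were two stationary laws, \cref{thm : bound over wasserstein distance MP-MP} would make the \( W_1 \) distance between their \( Z \)-marginals tend to zero while it is constant, so the \( Z \)-marginals coincide. Taking \( \nu^2(0)=\nu_\infty \) and \( \eta^2(0) \) the corresponding stationary \( Y \)-marginal gives \( \nu^2(t)=\nu_\infty \) for all \( t \). The stated inequality is then exactly the theorem, with \( w_0 \) computed against the stationary pair.

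\textbf{Main obstacle.} I expect the main obstacle to be the uniformity step: \( \gamma \) depends on \( w_0 \) through \( p^* \), so the uniform moment bounds are really needed. I would also need a little care that stationarity is read on the pair \( (Y,Z) \), since uniqueness of the \( Z \)-marginal is all the theorem directly controls.
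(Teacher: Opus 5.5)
Your proposal is correct and follows the paper's route. The paper proves this corollary in a single sentence, invoking completeness of the space of probability measures with finite first moment under \( W_1 \) together with \cref{thm : bound over wasserstein distance MP-MP}; you additionally supply what that sentence leaves implicit, namely uniform first-moment bounds so that \( \gamma \) stays bounded away from zero along the Cauchy family, continuity of the semigroup so that the limit is stationary, and working with the Markov pair \( (Y,Z) \) rather than \( Z \) alone, with convergence of the \( Y \)-marginal (or of the joint law) coming from \cref{prop : maj diff MP-MP} rather than from the theorem's statement, since the companion process dominates \( \Vert Z^1(t) - Z^2(t) \Vert + \weightnorm{Y^1(t) - Y^2(t)} \).
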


\section{Discussion} \label{section : discussion}

In this section, we discuss the results from \cref{section : notation and main results} and the mathematical interest of the proof's technique. Therefore, we recommend reading the beginning of \cref{section : Proofs Couplings P-P and MP-MP} beforehand.

In this paper we proved the ergodicity of two models of gene expression without any assumption on the Lipschitz constant \( \lipnorm \), namely the strength of interaction between genes. This is why \cref{thm : bound over wasserstein distance P-P} can be seen as an extension of~\citet[Theorem 3.4]{Chen2019} in which the ergodicity result needs the assumption (with our notation)
\begin{equation}\label{eq:dissipative_assumption_Chen2019}
\lipnorm < \roverdun^{-1}. 
\end{equation}
The setting in which we expressed our results is slightly different since we assume that the jump rate \( \kon \) is bounded. However, even if we intensively used this assumption to optimize the convergence speed \( \gamma \), our results stands without it. Removing the bounded assumption would, of course, change the expressions in both \cref{thm : bound over wasserstein distance P-P} and \cref{thm : bound over wasserstein distance MP-MP} and it would lead to a (slowest) exponential convergence rate. Nonetheless, the ergodicity and the exponential convergence speed remain true for \emph{any} value of the Lipschitz constant \( \lipnorm \). Note that, with our proof scheme, we recover the corresponding result of~\citet{Chen2019}. Indeed, without the bounded assumption, the jump rate of the companion process \( \Comp \) constructed in \cref{section : Proofs Couplings P-P and MP-MP} rewrites 
\[ \lambda_{\CompL}(u) = \sum_{i=1}^n \ell_i u, \]
where \( \ell_i \) are the ones defined in \cref{subsection : mathematical framework}. Using Grönwall's inequality as in \cref{prop_:_borne_uniforme_constante_esperance_U(t)}, we obtain that for all \( t \ge 0 \) 
\[ \Esp[ \CompL(t)] \le \Esp[ \CompL(0) ] \exp \left( \left( \sum_{i=1}^n \ell_i - \mindun \right) t \right), \]
which transcribed with notation in \cref{def_:_notation_measures} into 
\[ W_1( \mu^1(t), \mu^2(t)) = W_1(\mu^1(0), \mu^2(0)) \exp \left( \left( \sum_{i=1}^n \ell_i - \mindun \right) t \right). \]
The condition for this to converge is exactly \eqref{eq:dissipative_assumption_Chen2019}. This restriction remains strong as it constraints the strength of interactions between genes and prevents the model's interesting emerging properties from appearing. This is illustrated in \cref{fig:toggle_switch} for the toggle switch. This network of two genes that repress each other exhibits a bistable pattern, whereby one gene is \enquote{active} and the other \enquote{inactive}, with the roles alternating. Setting parameters such that \eqref{eq:dissipative_assumption_Chen2019} is satisfied restrict the dynamics and the pattern does not emerge. Instead each protein is expressed all the time.

\begin{figure}[ht]
\centering
\begin{subfigure}{\textwidth}
\centering
\includegraphics[width=\textwidth]{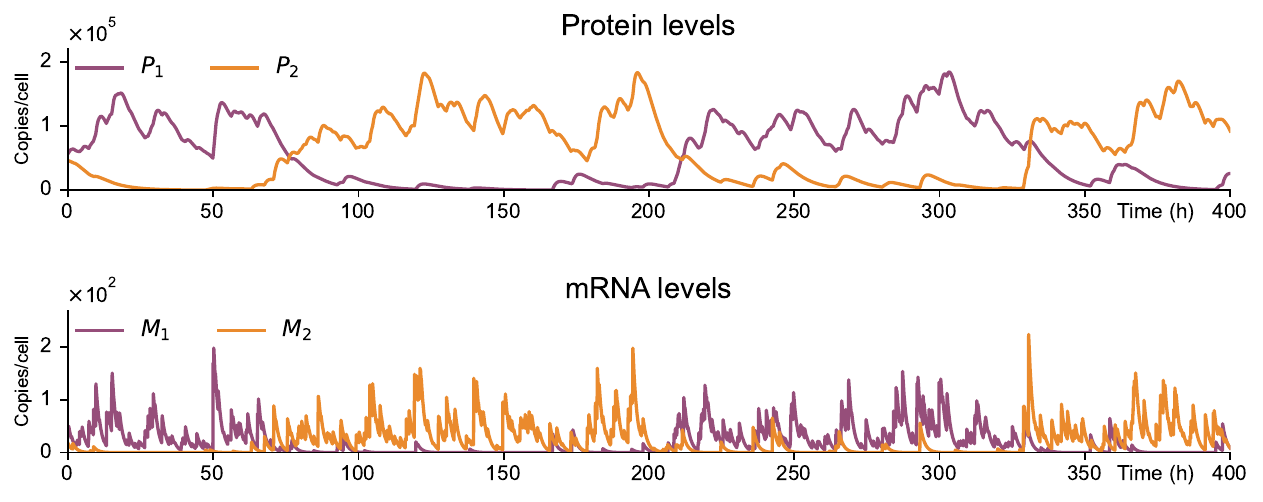}
\end{subfigure}

\bigskip
    
\begin{subfigure}{\textwidth}
\centering
\includegraphics[width=\textwidth]{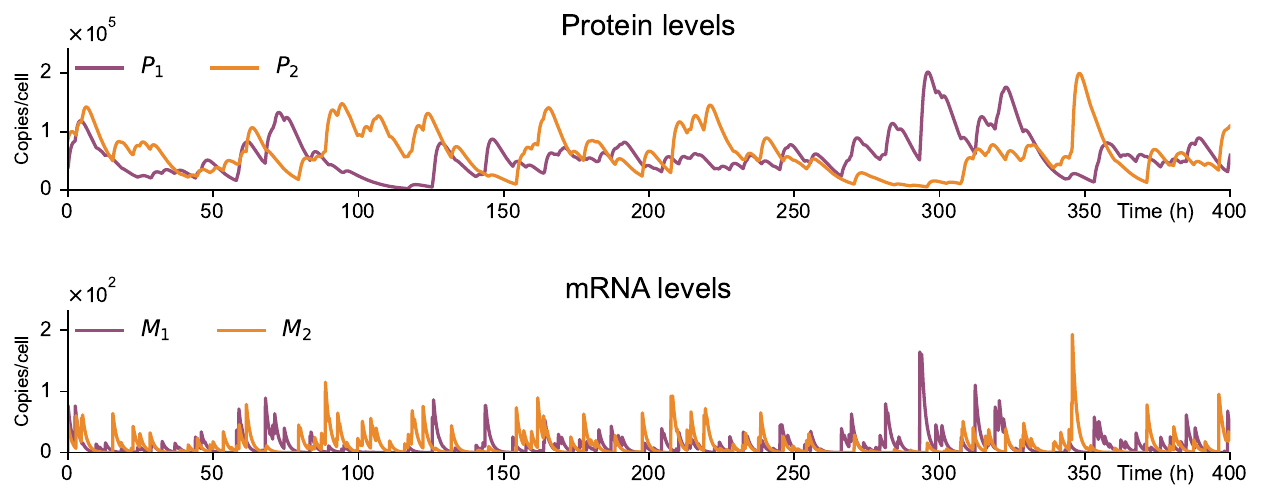}
\end{subfigure}
\small{\caption{Example of a trajectory of \cref{model : Mathematical Bursty Model} for a toggle switch (two genes repressing each other). The set of parameters used for the simulation resulting of the two top graphics gives \( \lipnorm < 6,6.10^{-1} \) and \( \roverdun^{-1} = 2,4.10^{-2} \). Thus \eqref{eq:dissipative_assumption_Chen2019} is not satisfied, however the bistable pattern of the model appears: the cell switches between two attractors where one of the two genes is \enquote{active} and the other is \enquote{repressed}. On the other hand, parameters for the simulation at the bottom satisfies \eqref{eq:dissipative_assumption_Chen2019} with \( \lipnorm < 4.1.10^{-3} \) and \( \roverdun^{-1} \approx 4,6.10^{-2} \) but interactions between genes are not strong to recover the bistable pattern.}}
\label{fig:toggle_switch}
\end{figure}

The proof's technique we use in this paper is in itself mathematically relevant. First, as discussed above, with the same coupling as in \citep{Chen2019} (defined at \eqref{eq : generator X1 X2 n genes}), it allows us to obtain a stronger result. Secondly, our method can be seen as an extension of the one developed~\citep{Benaim2012} to some unbounded settings. While the article~\citep{Benaim2012} considers the case of switching ODEs, our setting is different as we have bursty PDMPs, namely PDMPs driven by a unique flow and trajectories directly impacted by jumps. The mathematical technicalities resulting from this setting also make the proof interesting. More precisely in~\citep{Benaim2012}, when the discrete coordinates are different, the bounded assumption is used and the companion process stays constant, equal to the bound plus one (added to take into account unequaled discrete coordinates). When an event results in matching flows, namely equaled discrete components, the companion process jumps to the bound value without the plus one and then decreases exponentially fast to zero. Here, the companion process that naturally arises from our setting is a bursty PDMP similar to a one-dimensional version of \cref{model : Simplified Mathematical Model} with marginal generator
\[
L_{\CompL} = \mindun \degradop + \lambda_{\CompL} \burstop, \quad \text{where} \quad \lambda_{\CompL}(\CompSmL) = r( 1 \wedge \lipnorm \CompSmL).
\]
The first major technical difference is that its trajectories are unbounded and so we can not use a deterministic bound to almost surely control \( \CompL(t) \) for all time \( t \). The second difference concerns the control of the waiting times between jumps of \( \Comp \). Indeed, if as in~\citep{Benaim2012} the proof idea is to show that the companion process stops jumping and then use that it decreases to zero, in our setting, the instant of the last jump \( H \), can not be decomposed and controlled similarly. \( H \) is the sum of the (finite) waiting times of \( \Comp \), for which the distribution is only known conditionally to the post-jump value of the process. First, \cref{prop : maj sto finite waiting times} gives us a control over the distribution of finite waiting times, uniform with respect to the initial condition. Then, both differences explained above are handled with explicit construction of random variables using stochastic bounds. The mathematical detail of constructing all the random variables simultaneously is given in \cref{algo1}.  

Note that, unlike~\citep{Benaim2012}, we do not make the assumption that \( \kon \) is bounded by below by a strictly positive constant. This is an interesting property of PDMPs like \cref{model : Simplified Mathematical Model} and \cref{model : Mathematical Bursty Model}. While switching ODEs need this assumption to be ergodic, models considered here do not. The intuition is that switching ODEs needs switch to continuously happen in order to explore all the space, otherwise it may stay stuck in an attractor of a specific flow. For example, if the jump rate is equal to zero at some attractor point, the process may stop switching and therefore not be ergodic. For PDMPs considered here, if the jump rate is equal to zero at some attractor, the Dirac mass in the attractor value may be the stationary distribution of the process and the process can still be ergodic. This is exactly the case for the companion process \( \Comp \). It is easy to see that the distribution \( \delta_0 \) is stationary for this process, and the proof in \cref{section : Proofs Couplings P-P and MP-MP} shows that the process is ergodic as it almost surely has a finite number of jump and then tends to zero. Nonetheless, since biologically speaking we prefer that processes do not go extinct, a sufficient hypothesis for \cref{model : Simplified Mathematical Model} so that every protein coordinate keep bursting, would be that for every \( i \in \seg{n} \), \( \koni(0) \) is positive. With the same hypothesis, every coordinate of \( \ARNMP \) in \cref{model : Mathematical Bursty Model} keep bursting. 

While throughout this article, for biological relevance, the burst size has an exponential distribution, results in \cref{section : notation and main results} can be extend to other distribution by adapting the burst size distribution of \( \Comp \). We also highlight the fact that the \emph{same} process \( \Comp \) is used in both ergodicity proofs, where only the initial inequality that has to be verified by \( \CompL(0) \) differs. This suggests that such results can be obtained with the same technique for models with more layers, taking into account the different stages of mRNA maturation. The important criterion would be that proteins remain slower than other chemical species and drive the burst dynamics.

\section{Conclusion and perspectives}\label{section_:_conclusion_and_perspectives}

Under very few assumptions regarding the jump rate, we have proved that two commonly used models of gene expression are exponentially ergodic. While one result is new, the other extends the result of \citet{Chen2019} since it does not require an hypothesis on the strength of interactions. The method we used generalizes the technique developed by~\citet{Benaim2012}. It is intuitive to see that it can be applied to a wider class of bursty PDMPs respecting some dynamics conditions: the slowest chemical specie in the system should be the one regulating jumps. It could for example be used exactly the same way to obtain the ergodicity of models closer to the biological reality, taking into account more chemical species such as pre-mature mRNA \citep{Rudnicki2015}. This restriction comes from the fact that the companion process \( \Comp \), constructed in \cref{section : Proofs Couplings P-P and MP-MP}, is a one-dimensional process controlling the difference between two \(n\)-dimensional processes seen as two cells. This reduction of dimension imposes the use of the worst case scenario \eqref{eq:consequence_of_the_bounded_Lipschitz_assumption} to bound the jump rate of \( \Comp \). A more subtle upper bound would take into account that a protein may have a different impact according to its position in the network: the more a gene influences other, the longer it will take for two cells with different protein levels to synchronize. One way to tackle this problem may be to introduce a multidimensional companion process. However, such technique would require to add some knowledge about the jump rate \( \kon \). For example, without imposing a particular form to \( \kon \), only the topology of the gene regulatory network could already be an important information. A toy example would be a GRN structured as a directed tree, where every gene (a node) is influenced by at most one other gene (has at most one parent). A less naive companion process in this case may be a process with as many dimensions as the depth of the tree, each dimension handling the difference between proteins at identical depths. From the proof scheme it is seems intuitive that coordinates will successively stop jumping, but the mathematical difficulty has significantly increased. This perspective should be seen as an improvement for mathematical purposes, firstly because inferring GRNs is already in itself a very difficult task and an active field of research. Secondly, while the exponential ergodicity and the rates of these models may be interesting if one wants to simulate synthetic data according to the stationary distribution (for example before applying a stimulus \citep{Ventre2023}), to improve such rates seems not biologically relevant. Nonetheless, we think that the technique of the companion process should be further investigate in order to demonstrate the ergodicity of an even wider class of PDMPs. Increasing the dimension of the companion process may also be the way to extend the technique to other types of dynamic.

\section{Proofs of ergodicity} \label{section : Proofs Couplings P-P and MP-MP}

In this section we prove the ergodicity of both \cref{model : Simplified Mathematical Model} and \cref{model : Mathematical Bursty Model}. In both cases we construct a coupling of two processes with the same dynamics and bound the difference between the two trajectories by a specific Markov process, called the \emph{companion process}, whose construction is inspired by~\citep{Benaim2012}. This construction is first presented in \cref{subsection_:_processus_compagnon_P_modele} for \cref{model : Simplified Mathematical Model}. In \cref{subsection_:_etude_processus_compagnon} we demonstrate key properties of the companion process. Then in \cref{subsection_:_processus_compagnon_modele_MP} we construct the coupling for \cref{model : Mathematical Bursty Model}, using the same companion process to bound the difference between proteins trajectories. Finally in \cref{subsection_:_preuve_resultat_final_quantitative_ergodicity} we prove \cref{thm : bound over wasserstein distance P-P} and \cref{thm : bound over wasserstein distance MP-MP}. 

\bigskip

More specifically, the proof's structure is the following. Since the Wasserstein distance between two distributions is defined as an infimum over all couplings of these distributions, any coupling can be used to control the distance. For \cref{model : Simplified Mathematical Model} we will consider a Markov process \( (X^1(t), X^2(t))_{t \ge 0} \in (\R_+)^n \times (\R_+)^n \), where the dynamic of \( (X^1(t))_{t \ge 0} \) and \( (X^2(t))_{t \ge 0} \) are given by \eqref{eq : generator P n genes}. Similarly, for \cref{model : Mathematical Bursty Model}, we will consider \( (Y^1(t), Z^1(t), Y^2(t), Z^2(t))_{t \ge 0} \in (\R_+)^n \times (\R_+)^n \times (\R_+)^n \times (\R_+)^n \) , where the dynamics of \( (Y^1(t), Z^1(t))_{t \ge 0} \) and \( (Y^2(t), Z^2(t))_{t \ge 0} \) are prescribed by \eqref{eq : Générateur MP n gènes}. Given the definition of the Wasserstein distance, theses couplings should try to minimise
\[
\Esp[\Vert X^1(t) - X^2(t) \Vert] \quad \text{and} \quad \Esp[ \Vert Z^1(t) - Z^2(t) \Vert],
\]
for every \( t \ge 0 \).
In both cases the couplings \eqref{eq : generator X1 X2 n genes} and \eqref{eq_:_generator_MP_MP_U} are Markovian processes that can be seen as a joined distribution for two cells where the dynamics of each one is prescribed by one of the two models. Note that \eqref{eq : generator X1 X2 n genes} is the same coupling as in~\citep{Chen2019}. As the difference between protein levels, is not, in both cases, a Markovian process, we add a specific one-dimensional Markovian process \( \Comp \) to each coupling. This process \( \Comp  \in \R_+\) is called the \emph{companion process} and the idea of its name and construction comes from~\citep{Benaim2012}. As in~\citep{Benaim2012}, the remaining of the proof consists in showing that \( \Comp \) has a finite number of jumps and then control the exponential moment of \( H \), the instant of its last jump. Nevertheless, although the intuitions and the proof's structure are similar, the setting differs for two reasons. First the dynamics of our models, are driven by a unique flow. Secondly, the trajectories are unbounded because they are directly affected by jumps. This generates some mathematical technicalities that justify the mathematical interest in this proof. 

\subsection{A construction of the companion process} \label{subsection_:_processus_compagnon_P_modele}

Let us construct a coupling \( (X^{1}(t), X^{2}(t))_{t \ge 0} \in (\R_+)^n \times (\R_+)^n \) such that the marginals are driven by \eqref{eq : generator P n genes}. This results in the following infinitesimal generator:
\begin{equation}\label{eq : generator X1 X2 n genes}
L_{\text{P-P}} = \sum_{i=1}^n d_{1,i} \degradop_{x^{1}_i} + d_{1,i} \degradop_{x^{2}_i} + \jumpratecone{i} \burstop_{x^{1}_i} + \jumpratectwo{i} \burstop_{x^{2}_i} + \commonjumprate{i} \burstop_{x^1_i,x^2_i}
\end{equation}
with jump rates defined for every \( (x^1, x^2) \in (\R_+)^n \times (\R_+)^n \) by
\begin{align*}
\commonjumprate{i}(x^1,x^2) & = \koni(x^1) \wedge \koni(x^2), \\
\jumpratecone{i}(x^1,x^2) & = (\koni(x^{1}) - \koni(x^{2}) )^+, \\
\jumpratectwo{i}(x^1,x^2) & = (\koni(x^{1}) - \koni(x^{2}) )^-.
\end{align*}
As stated earlier, this coupling can be seen as a joint distribution for two cells, described by their protein levels, whose dynamics are given by \eqref{eq : generator P n genes}. With this coupling, the cells are not independent but rather try to synchronise by keeping their respective protein levels as similar as possible. The deterministic behaviour of our model ensures that, in the absence of jumps, the difference between two trajectories of \cref{model : Simplified Mathematical Model} decreases to 0. This difference is also maintained when there are simultaneous jumps. Indeed if gene \( i \) bursts in both cells, the burst sizes are coupled to be equal. Common jumps occur at the maximum possible rate and terms corresponding to single jumps are included to preserve marginal distributions. Note that if \( X^1(0) = X^2(0) \) almost surely, then \( X^1(t) = X^2(t) \) a.s. for all \( t \ge 0 \). By summing the jump rates in \eqref{eq : generator X1 X2 n genes}, we find that such unilateral jumps occur at a rate \( \Vert \kon(x^{1}) - \kon(x^{2}) \Vert \). Consequently, the process \( \left( \Vert X^{1}(t) - X^{2}(t) \Vert \right)_{t \ge 0} \) is not Markovian. To control this difference process and enable explicit computations, we add another one-dimensional Markovian process \( \Comp \) to the previous coupling. This results in the full coupling with infinitesimal generator
\begin{align} \label{eq : generator (X1, X2, U) n genes}
L_{\text{P-P-U}} = & \sum_{i=1}^n \left( d_{1,i} \left( \degradop_{x^1_i} + \degradop_{x^2_i} \right) + \jumpratecone{i} \burstop_{x^{1}_i,u} + \jumpratectwo{i} \burstop_{x^{2}_i,u} + \commonjumprate{i} \burstop_{x^1_i,x^2_i} \right) \nonumber \\
& + \mindun \degradop_u + \lambda_{\mathrm{U}} \burstop_u, 
\end{align}
where
\[ \lambda_{\text{U}}(x^1, x^2, u) = r (1 \wedge \lipnorm u) - \Vert \kon(x^1) - \kon(x^2) \Vert, \]
and other jump rates are the same as the previous coupling at \eqref{eq : generator X1 X2 n genes}. The next proposition shows that \( \Comp \) bounds the difference in protein levels at all time, if it controls it at time 0. 
\begin{proposition} \label{prop : maj diff P-P}
If the inequality
\begin{equation} \label{eq : maj diff P-P}
\Vert X^{1}(t) - X^{2}(t) \Vert \le \CompL(t),
\end{equation}
is verified at time \( t = 0 \), then it is verified for all \( t \ge 0 \).
\end{proposition}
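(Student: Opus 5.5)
The argument is pathwise. Between jumps the coupled process \((X^1,X^2,\CompL)\) in \eqref{eq : generator (X1, X2, U) n genes} follows its deterministic flow. At each jump time, one of three kinds of transition occurs. I will show that the quantity
\[
\Delta(t) = \CompL(t) - \Vert X^1(t) - X^2(t) \Vert
\]
cannot go from nonnegative to negative, neither along the flow nor at a jump. An induction over the successive jump times then gives the claim for all \(t \ge 0\). This works because the process is non-explosive: the total jump rate is bounded by \(2\sum_i k_{1,i} + r\) under \cref{hyp : kon bounded and Lipschitz}, so jump times do not accumulate.

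\emph{Flow.} By \eqref{eq_:_sol_analytique_EDO_modele_P}, between jumps \(X^1_i(t)-X^2_i(t) = (X^1_i(s)-X^2_i(s))e^{-d_{1,i}(t-s)}\). Hence
\[
\Vert X^1(t)-X^2(t)\Vert \le e^{-\mindun (t-s)}\Vert X^1(s)-X^2(s)\Vert ,
\]
since \(d_{1,i}\ge \mindun\). On the other hand \(\CompL(t)=\CompL(s)e^{-\mindun(t-s)}\) exactly. So if the inequality holds at \(s\), it holds on the whole inter-jump interval, including the left limit at the next jump.

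\emph{Jumps.} There are three cases.
\begin{enumerate}
\item A common burst \(\burstop_{x^1_i,x^2_i}\) adds the same amount to both \(x^1_i\) and \(x^2_i\). The difference is unchanged and \(\CompL\) does not move.
\item A unilateral burst \(\burstop_{x^1_i,u}\) (or \(\burstop_{x^2_i,u}\)) adds the same \(s\sim\Exp(1)\) to \(x^1_i\) and to \(u\). By the triangle inequality \(\Vert X^1-X^2\Vert\) increases by at most \(s\), while \(\CompL\) increases by exactly \(s\).
\item A solo burst \(\burstop_u\) only increases \(\CompL\).
\end{enumerate}
In every case \(\Delta\) does not decrease across the jump, so the inequality propagates.

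\emph{Well-definedness.} The main point needing care is that the construction is legitimate, namely that \(\lambda_{\mathrm U}\ge 0\). This is needed at least on the set where \eqref{eq : maj diff P-P} holds, which is all we need, since the induction keeps us in that set. The bound follows from \eqref{eq:consequence_of_the_bounded_Lipschitz_assumption} and the monotonicity of \(u\mapsto r(1\wedge \lipnorm u)\):
\[
\Vert \kon(x^1)-\kon(x^2)\Vert \le r\big(1\wedge \lipnorm\Vert x^1-x^2\Vert\big) \le r(1\wedge\lipnorm u).
\]
To be safe, I would define the generator with \(\lambda_{\mathrm U}^+\), or equivalently restrict the state space to \(\{\Vert x^1-x^2\Vert\le u\}\) and note that this set is invariant by the two steps above. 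I would also note that the \((X^1,X^2)\)-marginal is still the coupling \eqref{eq : generator X1 X2 n genes}, since the \(u\)-component does not feed back into the \(x\)-rates. Once positivity of the rate on the invariant set is secured, the rest is the routine induction over jump times.
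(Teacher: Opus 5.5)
Your proof is correct and follows the same pathwise argument as the paper: along the flow, \(\Vert X^1-X^2\Vert\) contracts at rate at least \(\mindun\) while \(\CompL\) decays at exactly \(\mindun\), common bursts change neither side, and unilateral bursts are handled by the triangle inequality. Your additional remarks are sound refinements of the same argument rather than a different route: the trivial solo \(\burstop_u\) case, non-explosion to justify the induction over jump times, and nonnegativity of \(\lambda_{\mathrm U}\) on the invariant set (which the paper only addresses afterwards, in its corollary).
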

To prove \cref{prop : maj diff P-P}, we use the explicit solution define at \eqref{eq_:_sol_analytique_EDO_modele_P}.
\begin{proof}[Proof of \cref{prop : maj diff P-P}]
Let's assume that inequality \eqref{eq : maj diff P-P} is verified at \( t = 0 \). First, inequality \eqref{eq : maj diff P-P} is preserved by the flow. Indeed, for \( t \ge 0 \) before the next jump, we have
\[ \Vert X^{1}(t) - X^{2}(t) \Vert \le \Vert X^{1}(0) - X^{2}(0) \Vert e^{- \underline{d}_1 t} \le \CompL(t). \]
Now, we will show that inequality \eqref{eq : maj diff P-P} is preserved when there is a jump. Without loss of generality, we can assume that the first jump occurs at time \( t = 0 \). The hypothesis must thus be specified and is now
\[ \Vert X^{1}(0^-) - X^{2}(0^-) \Vert \le \CompL(0^-),\]
where the exponent \( - \) (resp. \( + \) ) indicates the left limit (resp. the right limit) of the process. First we assume that the jump at time \( t = 0 \) is a simultaneous jump of \( (X^{1}_i(t))_{t \ge 0} \) and \( (X^{2}_i(t))_{t \ge 0} \) for some \( i \in \seg{n} \). In that case there is no jump of \( \Comp \). We thus have
\[ X^1_i(0^+) = X_i^1(0^-) + H, \quad X^2_i(0^+) = X^2_i(0^-) + H, \quad U(0^+) = U(0^-), \]
where \( H \sim \Exp(1) \) and all other coordinates being continuous. Thus, for \( t \ge 0 \) before the next jump, we have
\[
\Vert X^{1}(t) - X^{2}(t) \Vert \le \Vert X^{1}(0^-) - X^{2}(0^-) \Vert e^{- \underline{d}_1 t} \le \CompL(t).
\]
Similarly, in the case of unilateral jumps, we assume that, for some \( i \in \seg{n} \), there is a jump of \( (X^{1}_i(t))_{t \ge 0} \) only. In that case, there is also a jump of \( \Comp \). Namely
\[ X^1_i(0^+) = X^1_i(0^-) + H \quad \text{and} \quad U(0^+) = U(0^-) + H, \]
where \( H \sim \Exp(1) \) and all other coordinates being continuous. Thus, for \( t \ge 0 \) before the next jump, we have
\[ \Vert X^{1}(t) - X^{2}(t) \Vert \le \sum_{j=1}^n \vert X^{1}_j(0^-) - X^{2}_j(0^-) \vert e^{-d_{1,j} t} + H e^{- d_{1,i} t} \le \CompL(t). \]
With the same use of the triangle inequality we obtain the same result when there is a jump of \( (X^{2}_i(t))_{t \ge 0} \) only.
\end{proof}
Using the previous proposition and \cref{hyp : kon bounded and Lipschitz} we get the following result.
\begin{corollary}
If the inequality \eqref{eq : maj diff P-P} is verified at time \( t = 0 \), then for any value \( (x^{1}, x^{2}, \CompSmL) \) of the process \( (X^{1}(t), X^{2}(t), \CompL(t))_{t \ge 0} \) we have
\[ \Vert \kon (x^{1}) - \kon (x^{2}) \Vert \le r \left( 1 \wedge \lipnorm \CompSmL \right). \] 
\end{corollary}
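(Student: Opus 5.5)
The corollary follows by chaining \cref{prop : maj diff P-P} with the Lipschitz-type bound \eqref{eq:consequence_of_the_bounded_Lipschitz_assumption}. Assume \eqref{eq : maj diff P-P} holds at time \( t = 0 \). By \cref{prop : maj diff P-P}, it then holds almost surely for every \( t \ge 0 \). So any value \( (x^1, x^2, \CompSmL) \) taken by the process \( (X^1(t), X^2(t), \CompL(t))_{t \ge 0} \) satisfies \( \Vert x^1 - x^2 \Vert \le \CompSmL \).

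Next I apply \eqref{eq:consequence_of_the_bounded_Lipschitz_assumption}, which follows from \cref{hyp : kon bounded and Lipschitz}, with \( x = x^1 \) and \( z = x^2 \). This gives
\[ \Vert \kon(x^1) - \kon(x^2) \Vert \le r \left( 1 \wedge \lipnorm \Vert x^1 - x^2 \Vert \right). \]
The map \( v \mapsto r(1 \wedge \lipnorm v) \) is non-decreasing on \( \R_+ \), because \( r, \lipnorm \ge 0 \). Substituting \( \Vert x^1 - x^2 \Vert \le \CompSmL \) therefore yields the claimed bound \( \Vert \kon(x^1) - \kon(x^2) \Vert \le r(1 \wedge \lipnorm \CompSmL) \).

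The only point needing care is the meaning of ``any value of the process''. The inequality from \cref{prop : maj diff P-P} holds along almost every trajectory and at all times. This includes both left and right limits at jump times, since the proof of \cref{prop : maj diff P-P} handles \( 0^- \) and \( 0^+ \) separately.

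It is worth noting what the corollary buys. It shows that the rate \( \lambda_{\mathrm{U}} = r(1 \wedge \lipnorm \CompSmL) - \Vert \kon(x^1) - \kon(x^2) \Vert \) in \eqref{eq : generator (X1, X2, U) n genes} is non-negative on the reachable set, so the coupled generator is well defined. There is a mild circularity here: the construction of the process and the inequality depend on each other. I would resolve it by running the construction jump by jump. Between jumps and at each jump, \cref{prop : maj diff P-P} propagates the inequality, so \( \lambda_{\mathrm{U}} \ge 0 \) holds at every step along the way. I expect no real obstacle beyond this bookkeeping.
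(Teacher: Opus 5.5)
Your proposal is correct and follows essentially the paper's argument, which simply combines \cref{prop : maj diff P-P} with the bound \eqref{eq:consequence_of_the_bounded_Lipschitz_assumption} from \cref{hyp : kon bounded and Lipschitz}; you also state explicitly the monotonicity of \( v \mapsto r(1 \wedge \lipnorm v) \), which the paper uses without comment. Your remark that the corollary is what guarantees \( \lambda_{\mathrm{U}} \ge 0 \) on the invariant set \( \{ \Vert x^1 - x^2 \Vert \le \CompSmL \} \), so that the generator \eqref{eq : generator (X1, X2, U) n genes} is well defined, clarifies soundly a point the paper leaves implicit.
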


\subsection{Study of the companion process} \label{subsection_:_etude_processus_compagnon}

In this section we show crucial properties of \( \Comp \) in order to control its expectation at each time point. 

\bigskip

The naive (yet usefull) bound on the expectation of \( \Comp \) is the following.
\begin{proposition} \label{prop_:_borne_uniforme_constante_esperance_U(t)}
For all \( t \ge 0 \), 
\[ \Esp[U(t)] \le \Esp[U(0)] \vee \roverdun, \]
where \( \roverdun \) is defined at \eqref{eq_:_proba_arret_des_sauts_processus_compagnon}.
\end{proposition}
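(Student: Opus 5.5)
We apply the generator \( L_{\text{P-P-U}} \) of \eqref{eq : generator (X1, X2, U) n genes} to the coordinate function \( f(x^1,x^2,u) = u \). This yields a differential inequality for \( m(t) = \Esp[U(t)] \), which we then close with a comparison argument.

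\textbf{Step 1: the drift of \( u \).} Only three kinds of terms in \( L_{\text{P-P-U}} \) act on \( u \):
\begin{itemize}
\item the degradation \( \mindun \degradop_u \), which contributes \( -\mindun u \);
\item the unilateral bursts \( \jumpratecone{i} \burstop_{x^1_i,u} \) and \( \jumpratectwo{i} \burstop_{x^2_i,u} \), each of which adds an \( \Exp(1) \) increment of mean one to \( u \);
\item the autonomous burst \( \lambda_{\mathrm U}\burstop_u \), which also has mean increment one.
\end{itemize}
Since \( \sum_i (\jumpratecone{i}+\jumpratectwo{i}) = \Vert \kon(x^1)-\kon(x^2)\Vert \), the total burst rate on \( u \) is
\[ \Vert \kon(x^1)-\kon(x^2)\Vert + \lambda_{\mathrm U} = r(1\wedge \lipnorm u). \]
Hence
\[ L_{\text{P-P-U}}\, u = -\mindun u + r(1\wedge \lipnorm u) \le -\mindun u + r. \]
By the corollary to \cref{prop : maj diff P-P}, \( \lambda_{\mathrm U} \ge 0 \), so the process is well defined.

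\textbf{Step 2: Dynkin's formula.} We write
\[ m(t) = m(0) + \int_0^t \Esp\big[-\mindun U(s) + r(1\wedge\lipnorm U(s))\big]\intd{s}. \]
This gives \( m'(t) \le -\mindun m(t) + r \) in integral form. The main obstacle is justifying Dynkin's formula for the unbounded function \( u \). We handle it by localization: stop at \( T_N = \inf\{t : U(t)\ge N\} \) or at the \( N \)-th jump. Because the total jump rate of \( U \) is bounded by \( r \), the number of jumps on \( [0,t] \) is dominated by a Poisson\( (rt) \) variable. This gives
\[ U(t) \le U(0) + \sum_{k\le \mathrm{Poisson}(rt)} E_k, \]
an integrable bound. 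Dominated convergence then lets us pass to the limit \( N\to\infty \). This bound also shows directly that \( m(t)<\infty \).

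\textbf{Step 3: Grönwall comparison.} From the integral inequality, the function \( e^{\mindun t}(m(t) - \roverdun) \) is non-increasing, where \( \roverdun = r/\mindun \). Therefore
\[ m(t) \le \roverdun + (m(0)-\roverdun)e^{-\mindun t}. \]
If \( m(0)\le\roverdun \), the right-hand side is at most \( \roverdun \). Otherwise it is at most \( m(0) \). In both cases \( \Esp[U(t)] \le \Esp[U(0)]\vee\roverdun \).
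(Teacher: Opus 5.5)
Your proof is correct and follows the paper's argument. Like the paper, you apply the generator of the companion process to $u$ (total burst rate $r(1\wedge \lipnorm u)\le r$), obtain $\frac{\mathrm{d}}{\mathrm{d}t}\Esp[U(t)] \le -\mindun \Esp[U(t)] + r$, and conclude by Grönwall that $\Esp[U(t)] \le \Esp[U(0)]e^{-\mindun t} + \roverdun(1-e^{-\mindun t}) \le \Esp[U(0)]\vee\roverdun$; your localization and Poisson-domination justification of Dynkin's formula for the unbounded function $u$ is a rigorous addition that the paper leaves implicit.
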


\begin{proof} From \eqref{eq : generator (X1, X2, U) n genes} we deduce that the marginal infinitesimal generator of \( \Comp \) is
\[ L_{\mathrm{U}} = - \mindun \degradop + \lambda \burstop, \quad \text{where} \quad \lambda(\CompSmL) = r(1 \wedge \lipnorm \CompSmL). \]
Since the expectation of the companion process satisfies the following ODE
\[ \frac{\mathrm{d}}{\mathrm{d}t} \Esp[\CompL(t)] = \Esp[ L_{\mathrm{U}}\CompL(t)], \]
we have the inequality
\[\frac{\mathrm{d}}{\mathrm{d}t} \Esp[\CompL(t)] = -\mindun \Esp[\CompL(t)] + r \Esp[1 \wedge (\lipnorm \CompL(t))] \le  -\mindun \Esp[\CompL(t)] + r. \]
Using Grönwall lemma, we obtain
\[ \Esp[\CompL(t)] \le \CompL(0) e^{-\mindun t} + \roverdun (1 - e^{- \mindun t}) \le \Esp[\CompL(0)] \vee \roverdun. \]
\end{proof}

\begin{proposition} \label{prop : companion process finite number of jumps}
The process \( \Comp \) has almost surely a finite number of jumps.
\end{proposition}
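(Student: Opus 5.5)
The strategy is to show that, from any state, the probability that \( \Comp \) never jumps again is bounded below by a constant that does not depend on the state, and then to conclude with a geometric-trials argument. The marginal dynamics of \( \Comp \) is one-dimensional: deterministic decay \( \CompSmL \mapsto \CompSmL e^{-\mindun t} \) between jumps, with jump rate \( \lambda(\CompSmL) = r(1 \wedge \lipnorm \CompSmL) \). Starting from \( \CompSmL \) just after a jump, the probability of no further jump is
\[ \Prob_{\CompSmL}(\text{no jump}) = \exp\left( -\int_0^{\infty} r\left(1 \wedge \lipnorm \CompSmL e^{-\mindun s}\right) \intd{s} \right). \]
This integral is finite because the integrand decays exponentially. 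If \( \lipnorm \CompSmL \le 1 \), it equals \( \roverdun \lipnorm \CompSmL \). If \( \lipnorm \CompSmL > 1 \), we split at the time \( s_0 = \mindun^{-1}\log(\lipnorm \CompSmL) \) where the minimum switches, and get \( \roverdun(\log(\lipnorm\CompSmL) + 1) \). Hence
\[ \Prob_{\CompSmL}(\text{no jump}) = e^{-\roverdun(1 \wedge \lipnorm \CompSmL)} \left( \frac{1}{1 \vee \lipnorm \CompSmL} \right)^{\roverdun}, \]
which is exactly \( p^* \) from \eqref{eq_:_proba_arret_des_sauts_processus_compagnon}. This quantity is positive for every finite \( \CompSmL \), but it tends to \( 0 \) as \( \CompSmL \to \infty \), so it is not uniform in \( \CompSmL \).

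The central step is therefore to control the post-jump values of \( \Comp \). After a jump, \( \CompL = \CompSmL^- + E \) with \( E \sim \Exp(1) \) independent, and the pre-jump value \( \CompSmL^- \) comes from the decaying flow. I would argue by contradiction on the event of infinitely many jumps. There are two cases.

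\emph{Case 1: infinitely many jumps occur with \( \CompL \) below a fixed level \( K \).} At each such jump the post-jump value lies in \( [0, K + E] \). By the strong Markov property, conditionally on the past, the chance of no further jump is at least \( \Esp[p^*(K + E)] = c_K > 0 \). A conditional Borel--Cantelli argument (Lévy's extension) then shows that, almost surely, infinitely many such attempts cannot all fail.

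\emph{Case 2: \( \CompL \) stays eventually above every level.} Here I would use the marginal dynamics. The jump rate is bounded by \( r \), so jumps occur at most like a Poisson process of rate \( r \), while the flow gives exponential decay at rate \( \mindun \). Comparing \( \CompL \) with a process of the form \( \CompL(0) e^{-\mindun t} + \sum_k E_k e^{-\mindun(t - T_k)} \), driven by a Poisson\( (r) \) process, shows that \( \CompL(t) \) does not tend to infinity. More concretely, \( \CompL \) returns to \( [0, K] \) infinitely often, for instance by an ergodicity or recurrence argument for this dominating shot-noise process. This places us back in Case 1.

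An alternative route, possibly simpler, is to dominate \( \Comp \) stochastically by that shot-noise process and use a Lyapunov-type bound on the return times to \( [0, K] \).

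I expect the main obstacle to be the uniformity issue: \( p^*(\CompSmL) \to 0 \) as \( \CompSmL \to \infty \), so the geometric argument requires showing that \( \Comp \) returns to a bounded set infinitely often. I would handle this with the shot-noise domination. I would also consider the bound \( \Esp[\CompL(t)] \le \Esp[\CompL(0)] \vee \roverdun \) from \cref{prop_:_borne_uniforme_constante_esperance_U(t)}, combined with Markov's inequality, to show \( \Prob(\CompL(t) \le K) \) is bounded below. This gives \( \liminf \CompL < \infty \) almost surely via Fatou's lemma applied to \( \Prob(\CompL(t) > K) \).
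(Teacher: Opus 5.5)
Your argument is correct, and it takes a genuinely different route from the paper's.

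\textbf{The paper's route.} The paper computes \(q(u_0)=\Prob(T=+\infty\mid \CompL(0)=u_0)\), which is your function, i.e.\ \eqref{eq_:_proba_temps_attente_infini_conditionnellement_valeur_init}. Note that the paper's \(p^*(u)\) is \(q(u\vee\roverdun)\), not \(q(u)\). It then observes that \(q\) is convex and non-increasing and applies Jensen to get \(\Prob(T=+\infty)\ge q(\Esp[\CompL(0)]\vee\roverdun)\). Finally it invokes \cref{prop_:_borne_uniforme_constante_esperance_U(t)} to claim this same bound for \emph{every} inter-arrival time, and concludes by geometric trials.

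\textbf{Your route.} You accept that \(q\) has no positive lower bound and localise instead. The uniform bound \(c_K\) at jumps with pre-jump value at most \(K\), together with conditional Borel--Cantelli, rules out infinitely many such jumps for each integer \(K\). The complementary event \(\CompL(t)\to\infty\) is then null, because \(\Prob(\CompL(n)>K\ \text{for all large } n)\le\liminf_n\Prob(\CompL(n)>K)\le(\Esp[\CompL(0)]\vee\roverdun)/K\).

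\textbf{Points to tighten.} Say explicitly that if \(\CompL(t)<K\) at a late time and jumps continue, the next jump is a ``low'' one, since \(\CompL\) only decreases between jumps. The Fatou--Markov step already closes Case 2, so the shot-noise comparison is unnecessary.

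\textbf{What each approach buys.} The paper's route produces the explicit constant \(p^*\), which is reused downstream: geometric domination of \(N\), the bound on \(\Esp[e^{sH}]\), and the rate \(\gamma\). Yours is purely qualitative and gives no such constant. On the other hand, you only use the expectation bound at deterministic times, where \cref{prop_:_borne_uniforme_constante_esperance_U(t)} actually applies. The paper's transfer step instead needs a bound on \(\Esp[\CompL(J_k)\mid J_k<\infty]\) at the random jump times \(J_k\), which that proposition does not give. Indeed, the post-jump value has conditional mean at least \(1\), which may exceed \(\Esp[\CompL(0)]\vee\roverdun\). So for the qualitative statement, your argument is the more solid one.
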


\begin{proof}[Proof of \cref{prop : companion process finite number of jumps}]
Let \( T \) be the waiting time before the next jump of \( \Comp \). For \( t \ge 0 \), we have 
\begin{equation}\label{eq_:_survival_function_waiting_time}
\Prob(T > t \vert \CompL(0) = \CompSmL_0) = \exp \left( - r \int_0^t  1 \wedge (\lipnorm \CompSmL_0 e^{-\underline{d}_1 s}) \mathrm{d}s \right).
\end{equation}
For \( s \in [0,t] \) and \( \CompSmL_0 > 0 \) we have 
\[ 1 \ge \lipnorm \CompSmL_0 e^{- \underline{d}_1 s} \Longleftrightarrow s \ge \dfrac{1}{\underline{d}_1} \ln \left( \lipnorm \CompSmL_0 \right). \]
We define the time where both terms of the minimum are equal
\[ t^* = \dfrac{1}{\underline{d}_1} \ln \left( \lipnorm \CompSmL_0 \right). \]
The expression of the integral in \eqref{eq_:_survival_function_waiting_time} simplify differently according to the sign of \( t^* \). For \( t^* \le 0 \) (i.e. \( \CompSmL_0 \le 1 \slash \lipnorm \)), we have
\[
\Prob(T > t \vert \CompL(0) = \CompSmL_0) = \exp \left(- \lipnorm \roverdun \CompSmL_0 (1 - e^{- \underline{d}_1 t}) \right),
\]
and for \( t^* > 0 \) (i.e. \( \CompSmL_0 > 1 \slash \lipnorm \)),
\[
\Prob(T > t \vert \CompL(0) = \CompSmL_0) = \left\{ \begin{array}{ll}
e^{-r t} & \text{if } t \le t^* \\
\left( \lipnorm \CompSmL_0 \right)^{- \roverdun} e^{ \roverdun \left( \lipnorm \CompSmL_0 e^{- \mindun t} - 1 \right) } & \text{if } t > t^*
\end{array} \right.
\]
The probability for \( T \) to be infinite, conditionally to the initial value, is given by
\begin{equation}\label{eq_:_proba_temps_attente_infini_conditionnellement_valeur_init}
\Prob( T = + \infty \vert \CompL(0) = \CompSmL_0) = e^{- \lipnorm \roverdun \CompSmL_0} \mathds{1}_{\CompSmL_0 \le 1 \slash \lipnorm} + \left( \lipnorm \CompSmL_0 \right)^{-\roverdun} e^{- \roverdun} \mathds{1}_{\CompSmL_0 > 1 \slash \lipnorm}
\end{equation}
Note that this function is consistent for \( \CompL(0) = 0 \). The application 
\[ \CompSmL_0 \longmapsto \Prob(T = + \infty \vert \CompL(0) = \CompSmL_0) \]
is convex. Using Jensen inequality and since the obtained bound is a non increasing function of \( \mathbb{E}[\CompL(0)] \) we get
\[ \Prob(T = + \infty) \ge e^{- \roverdun (1 \wedge \lipnorm u)} \left( \frac{1}{1 \vee \lipnorm u} \right)^{\roverdun} \]
where \( u = \Esp[U(0)] \vee \roverdun \). The bound provided in \cref{prop_:_borne_uniforme_constante_esperance_U(t)} ensure that for \emph{any} inter-arrival time \( T \), the previous inequality is satisfied. This probability being strictly positive, we conclude that almost surely an inter-arrival time is going to be infinite, causing \( \Comp \) to stop jumping.
\end{proof}

We can now define the instant of the final jump of \( \Comp \) by
\[
H = \sum_{i=1}^N T_i,
\]
where \( N \) is the random variable associated to the number of jumps and \( (T_i)_{1 \le i \le N} \) are the \emph{finite} waiting times between two jumps. We now try to control the exponential moment of \( H \). First we can stochastically upper bound \( N \).

\begin{proposition} \label{prop_:_maj_sto_nbre_saut_U(t)}
The distribution of \( N \) is stochastically smaller than a geometric distribution of parameter
\[ p^* = e^{- \roverdun (1 \wedge \lipnorm u)} \left( \frac{1}{1 \vee \lipnorm u} \right)^{\roverdun} \]
where \( u = \Esp[U(0)] \vee \roverdun \).
\end{proposition}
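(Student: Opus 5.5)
The plan is to compare \(N\) with the number of failures before the first success in a sequence of Bernoulli trials with success probability \(p^*\). The tool is the strong Markov property of the companion process at its successive jump times, combined with the uniform lower bound on \(\Prob(T=+\infty)\) obtained in the proof of \cref{prop : companion process finite number of jumps}.

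First I fix the jump times \(S_0=0<S_1<S_2<\dots\) of \(\Comp\), setting \(S_k=+\infty\) once no further jump occurs. I write \(T_{k+1}=S_{k+1}-S_k\) for the waiting time that starts at \(S_k\) on the event \(\{S_k<\infty\}\). By the strong Markov property of the full process \((X^1,X^2,\CompL)\) at \(S_k\), the conditional law of \(T_{k+1}\) given \(\mathcal F_{S_k}\) depends only on the post-jump state. Only \(\CompL(S_k)\) matters, because the marginal jump rate of \(\Comp\) is \(r(1\wedge\lipnorm \CompL)\) and does not depend on \(x^1,x^2\). Hence, by \eqref{eq_:_proba_temps_attente_infini_conditionnellement_valeur_init},
\[
\Prob\bigl(T_{k+1}=+\infty \,\big\vert\, \mathcal F_{S_k}\bigr)=g\bigl(\CompL(S_k)\bigr)\quad\text{on } \{S_k<\infty\},
\]
where \(g\) is the convex, non-increasing function of \eqref{eq_:_proba_temps_attente_infini_conditionnellement_valeur_init}.

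The key step is a uniform lower bound \(\Prob(S_{k+1}=\infty\mid S_k<\infty)\ge p^*\) for every \(k\). To obtain it I need to control \(\Esp[\CompL(S_k)\mid S_k<\infty]\) by \(u=\Esp[\CompL(0)]\vee\roverdun\), and then apply Jensen to the convex \(g\), using that \(g\) is non-increasing so that \(g(\Esp[\cdot])\ge g(u)=p^*\). \cref{prop_:_borne_uniforme_constante_esperance_U(t)} bounds \(\Esp[\CompL(t)]\) only at deterministic times, so it cannot be applied directly at the random times \(S_k\). This is the step I expect to be the main obstacle.

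My first attempt at this obstacle is a direct recursion on conditional expectations. Starting from a state \(\CompSmL_0\), the process before its first jump is \(\CompSmL_0e^{-\mindun t}\), and the jump adds an independent \(\Exp(1)\) increment. So
\[
\Esp[\CompL(S_1)\mid T_1<\infty,\ \CompL(0)=\CompSmL_0]=\CompSmL_0\,\Esp[e^{-\mindun T_1}\mid T_1<\infty]+1,
\]
and this quantity has to be compared with \(\CompSmL_0\vee\roverdun\). If this recursion does not close neatly, the fallback is to prove the Grönwall bound of \cref{prop_:_borne_uniforme_constante_esperance_U(t)} for the process restarted from an arbitrary initial law. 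I would then apply it to the process restarted at \(S_k\) with initial law \(\mathcal L(\CompL(S_k)\mid S_k<\infty)\), and argue by induction that this law has mean at most \(u\). This is the same reasoning the paper uses at the end of the proof of \cref{prop : companion process finite number of jumps}, where it states that the bound holds for any inter-arrival time.

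Granting this bound, the conclusion is routine. For every \(k\),
\[
\Prob(N\ge k+1)=\Prob(S_{k+1}<\infty)=\Prob(S_k<\infty)\,\Prob(T_{k+1}<\infty\mid S_k<\infty)\le(1-p^*)\Prob(N\ge k).
\]
Iterating gives \(\Prob(N\ge k)\le(1-p^*)^k\), which is the tail of a geometric variable counting failures before the first success, with parameter \(p^*\). By the distribution-function characterisation of \(\majsto\) recalled in \cref{section : notation and main results}, this tail comparison is exactly \(N\majsto\mathcal G(p^*)\). It also provides the coupling that realises this ordering almost surely, which is what the construction of \(H\) in the next step will need.
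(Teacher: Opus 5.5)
You have located the weak point correctly, but it cannot be repaired. The bound $\Esp[\CompL(S_k)\mid S_k<\infty]\le u$ that you grant is false in general, and so is the per-step inequality $\Prob(S_{k+1}=\infty\mid S_k<\infty)\ge p^*$.

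Every post-jump state contains a fresh burst: $\CompL(S_k)=\CompL(S_k^-)+E_k\ge E_k$, with $E_k\sim\Exp(1)$ independent of $\mathcal F_{S_k^-}$. Hence $\Esp[\CompL(S_k)\mid S_k<\infty]\ge 1$.
\begin{itemize}
\item Your recursion would require $\CompSmL_0\,\Esp[e^{-\mindun T_1}\mid T_1<\infty]+1\le \CompSmL_0\vee\kappa$. This is impossible as soon as $\CompSmL_0\vee\kappa<1$.
\item The restarted-Gr\"onwall induction fails for the same reason.
\item Conditioning on $\{S_k<\infty\}$ only makes things worse. Since $1-g$ is non-decreasing, it size-biases the post-jump law towards large values.
\end{itemize}

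This is not an artefact of Jensen: the statement itself fails. Take $\mindun=1$, $r=1/2$ (so $\kappa=1/2$), $\ell=1/100$ and $\CompL(0)\equiv 1/2$. Then $u=1/2$ and $\ell u<1$, so $p^*=e^{-\ell\kappa u}=e^{-1/400}$, which equals $g(1/2)=\Prob(T_1=\infty)$. Three facts now give a lower bound on the second jump probability: $g$ is non-increasing, $g(v)=e^{-v/200}$ on $[0,100]$, and $E_1$ is independent of $\{T_1<\infty\}$. Therefore
\[
\Prob(T_2<\infty\mid T_1<\infty)\ge\Esp\bigl[1-g(E_1)\bigr]\ge\int_0^{100}\bigl(1-e^{-v/200}\bigr)e^{-v}\,\mathrm{d}v\ge\frac{1}{201}-e^{-100}>\frac{1}{400}\ge 1-p^*.
\]
It follows that $\Prob(N\ge 2)=\Prob(T_1<\infty)\,\Prob(T_2<\infty\mid T_1<\infty)>(1-p^*)^2$. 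This contradicts $N\majsto\mathrm{Geometric}(p^*)$.

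The paper's proof uses exactly your product scheme. It obtains $p_i\ge p^*$ by invoking the last lines of the proof of \cref{prop : companion process finite number of jumps}, that is, by applying the deterministic-time bound of \cref{prop_:_borne_uniforme_constante_esperance_U(t)} at jump times. So it contains the same gap.

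A correct geometric domination would have to control the post-jump chain $V_{k+1}=V_k e^{-\mindun T_{k+1}}+E_{k+1}$ conditioned on survival, for instance through a drift argument. The example shows that its parameter must in general be smaller than $p^*$.
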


\begin{proof}[Proof of \cref{prop_:_maj_sto_nbre_saut_U(t)}]
In the proof of \cref{prop : companion process finite number of jumps} we have shown that 
\[ p_i = \Prob( T_i = + \infty ) \ge p^* > 0,\]
for every \( i \in \seg{N} \). Then for \( k \in \N \)
\[ \Prob(N \le k) = 1 - \Prob(N \ge k + 1) = 1 - \prod_{i=1}^{k+1} (1 - p_i) \ge 1 - (1 - p^*)^{k+1}. \]
Note that we are using the following convention : if \( X \)  has a geometric distribution of parameter \( p \), for \( k \in \N \)
\[ \Prob(X = k) = p(1-p)^{k}. \]
\end{proof}

The following proposition uniformly upper bounds the distribution of finite inter-arrival times of \( \Comp \) with respect to the initial condition. Since the proof is long and is not mathematically relevant, it is moved to appendix.
\begin{proposition} \label{prop : maj sto finite waiting times}
Let \( T \) be the waiting time before the next jump of \( \Comp \). For all \( t \ge 0 \), 
\[ \inf_{\CompSmL_0 > 0} \Prob( T \le t \vert T < + \infty, \CompL(0) = \CompSmL_0) \ge 1 - e^{- \tau t}, \]
where \( \tau =  r \wedge \mindun \).
\end{proposition}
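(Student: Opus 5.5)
The plan is to bound the conditional survival function directly, using the explicit formulas for \( \Prob(T > t \vert \CompL(0) = \CompSmL_0) \) obtained in the proof of \cref{prop : companion process finite number of jumps}. Write \( S(t) = \Prob(T > t \vert \CompL(0) = \CompSmL_0) \) and \( S(\infty) = \Prob(T = +\infty \vert \CompL(0) = \CompSmL_0) \). Then
\[ \Prob(T > t \vert T < +\infty, \CompL(0) = \CompSmL_0) = \frac{S(t) - S(\infty)}{1 - S(\infty)}, \]
and it suffices to show that this is at most \( e^{-\tau t} \) for every \( \CompSmL_0 > 0 \) and \( t \ge 0 \). Following the proof of \cref{prop : companion process finite number of jumps}, I split according to the sign of \( t^* \).

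\emph{Case \( \CompSmL_0 \le 1/\lipnorm \).} Set \( a = \lipnorm \roverdun \CompSmL_0 \) and \( x = e^{-\mindun t} \in (0,1] \). Then \( S(t) = e^{-a(1-x)} \) and \( S(\infty) = e^{-a} \), so the conditional survival equals \( (e^{ax} - 1)/(e^{a} - 1) \). The map \( y \mapsto e^{ay} - 1 \) is convex and vanishes at \( 0 \). Hence it lies below its chord on \( [0,1] \), which gives \( e^{ax} - 1 \le x(e^a - 1) \). The conditional survival is therefore at most \( e^{-\mindun t} \le e^{-\tau t} \). I record this chord inequality as a lemma valid for any \( a > 0 \), because it is reused below with \( a = \roverdun \).

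\emph{Case \( \CompSmL_0 > 1/\lipnorm \).} Here \( S(\infty) = c := (\lipnorm \CompSmL_0)^{-\roverdun} e^{-\roverdun} \). The key observation is that \( (\lipnorm \CompSmL_0)^{-\roverdun} = e^{-\roverdun \mindun t^*} = e^{-r t^*} \), so \( c = e^{-r t^*} e^{-\roverdun} \).

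For \( t \le t^* \) we have \( S(t) = e^{-rt} \le 1 \). This gives \( e^{-rt} - c \le e^{-rt}(1-c) \), so the ratio is at most \( e^{-rt} \le e^{-\tau t} \).

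For \( t > t^* \), put \( s = t - t^* \). The formula becomes \( S(t) = e^{-r t^*} \exp\bigl(-\roverdun(1 - e^{-\mindun s})\bigr) \), since at time \( t^* \) the companion process sits exactly at \( 1/\lipnorm \). Hence
\[ S(t) - c = e^{-r t^*}\Bigl(\exp\bigl(-\roverdun(1 - e^{-\mindun s})\bigr) - e^{-\roverdun}\Bigr). \]
Applying the first-case lemma with \( a = \roverdun \) bounds the bracket by \( e^{-\mindun s}(1 - e^{-\roverdun}) \). This yields
\[ S(t) - c \le e^{-r t^*} e^{-\mindun (t - t^*)} (1 - e^{-\roverdun}) \le e^{-\tau t}(1 - e^{-\roverdun}), \]
using \( r, \mindun \ge \tau \). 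Finally, \( 1 - c \ge 1 - e^{-\roverdun} \) because \( e^{-r t^*} \le 1 \). Dividing gives the claim.

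Since every bound is uniform in \( \CompSmL_0 \), taking the infimum finishes the proof. The main obstacle I expect is the regime \( t > t^* \), where the two exponential phases have different rates \( r \) and \( \mindun \). The trick is to factor out \( e^{-rt^*} \), recognise the identity \( (\lipnorm \CompSmL_0)^{-\roverdun} = e^{-rt^*} \), and reduce to the convexity argument of the first case. This is exactly where the slower rate \( \tau = r \wedge \mindun \) enters.
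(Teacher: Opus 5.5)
Your proof is correct. In two of the three regimes it is the paper's own argument. The appendix's claim that $t\mapsto \frac{e^{Ce^{-d_1 t}}-1}{e^{C}-1}-e^{-d_1 t}$ is non-positive is exactly your chord inequality, and the case $u_0>1/\ell$, $t\le t^*$ is the same one-line estimate.

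The routes differ in the regime $u_0>1/\ell$, $t>t^*$. The paper introduces an auxiliary two-phase law, with density proportional to $re^{-rs}$ on $[0,t^*]$ and to $d_1e^{-d_1 s}$ afterwards, glued continuously. It shows that the conditional distribution function dominates this law's distribution function $G$, by a sign study of the derivative of their difference. It then removes the $t^*$-dependence through a monotonicity argument on $s\mapsto \kappa e^{s(d_1-r)}/(1-e^{-rs}(1-\kappa))$, followed by an inequality on an auxiliary function $h$.

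You instead use $(\ell u_0)^{-\kappa}=e^{-rt^*}$, i.e.\ the flow sits at $1/\ell$ at time $t^*$. After factoring out $e^{-rt^*}$, the post-$t^*$ part is literally the first-case expression with $a=\kappa$. The same chord lemma and the crude bound $1-c\ge 1-e^{-\kappa}$ then finish the proof.

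Your route is shorter, purely algebraic, and insensitive to the relative size of $r$ and $d_1$. By contrast, the map in the paper's last step has derivative of the sign of $(d_1-r)(e^{rs}-1)$. So that step actually needs separate treatment of $r<d_1$ and $r\ge d_1$, which your argument avoids.

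What the paper's detour buys is an interpretable intermediate stochastic domination by a \enquote{constant-rate then decaying-rate} mixture. It does not buy extra sharpness: before your final crude step, your bound $\frac{(1-e^{-\kappa})e^{-rt^*}}{1-e^{-rt^*-\kappa}}\,e^{-d_1(t-t^*)}$ is already no larger than the paper's $1-G(t)$.
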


It follows from that result and the Markov property, that the distribution of any waiting time between jumps is stochastically bounded by the exponential distribution of parameter \( \tau \). Classically, this means there exists a coupling of any finite waiting time \( T_i \) with a exponential random variable \( V_i \) such that 
\[ T_i \le V_i \; \text{a.s.} \]
Similarly, \cref{prop_:_maj_sto_nbre_saut_U(t)} ensures there exists a coupling of \( N \) with a variable \( N' \) such that
\[ N' \sim \mathrm{Geometric}(p^*) \quad \text{and} \quad N \le N' \; \text{a.s.} \]
However, since both stochastic bounds do not depend of the initial condition, here we can go further and explicitly construct a coupling of all random variables \( N' \), \( V_1, \dots, V_{N'} \) such that they are mutually independent. The simultaneous construction of all variables along with the companion process is made in \cref{algo1}. This algorithm uses the inverse transformation sampling method. Since all uniform random variables used through out the algorithm are independent, the transformed variables are independent as well. Note that this algorithm is not numerically efficient since it requires to invert the function \eqref{eq_:_proba_temps_attente_infini_conditionnellement_valeur_init} at each step for a new value of \( \CompSmL_0 \). Nevertheless, it has theoretical relevance to exhibit a coupling of all variables on the same probability space. The main point of this algorithm is that it decides beforehand if the next waiting time takes infinite value, which the usual reject simulation algorithm fails to do.

\begin{algorithm}[ht]
\caption{{\small Simulation of the companion process \( \Comp \) at a fixed time point \( t \ge 0 \). This algorithm differs from the thinning algorithm classically used to simulate PDMPs. Indeed, it uses the specific behavior of \( \Comp \) by deciding beforehand if \( \Comp \) has another jump, or not. We emphasise that this algorithm is numerically inefficient since it requires to invert, every time there is a jump, a cumulative distribution function. However this algorithm shows how to simultaneously construct \( \Comp \) and all the random variables needed in \cref{proof_:_maj_expectation_companion_process}, namely \( N, N' \) and \( T_1, \dots, T_N, V_1, \dots V_{N'} \).}}
\label{algo1}
\begin{algorithmic}[1]
\Require initial state $ \CompSmL $, fixed simulation time $t > 0$ and, if \( \CompSmL \) is random, \( \overline{u} = \Esp[\CompSmL] \)
\State $\CompL \gets \CompSmL$ \Comment{\emph{Initialize companion process}}
\State \( k \gets 0 \) \Comment{\emph{Initialize jump counter}}
\State $T_{\mathrm{current}} \gets 0$ \Comment{\emph{Initialize current simulation time}}
\State \( p_{U} \gets e^{- \lipnorm \roverdun U} \mathds{1}_{U \le 1 \slash \lipnorm} + \left( \lipnorm U \right)^{-\roverdun} e^{- \roverdun} \mathds{1}_{U > 1 \slash \lipnorm}\)
\newline 
\Comment{\emph{Probability for next waiting time of \( \Comp \) to have infinite value given the initial condition~\eqref{eq_:_proba_temps_attente_infini_conditionnellement_valeur_init}}}
\While{$T_{\mathrm{current}} < t$}
\State \( U_{\mathrm{old}} \gets U \) 
\State \( T_{\mathrm{old}} \gets T_{\mathrm{current}} \)
\State \( W \gets \mathrm{Unif}([0,1]) \)
\newline
\Comment{\emph{Decide if next waiting time of \( \CompL \) has infinite value}}
\If{\( W \le p^* \)}
\Comment{\emph{\( p^* = p^*(\overline{u}) \) is defined at \eqref{eq_:_proba_arret_des_sauts_processus_compagnon}}}
\State \( N, N' \gets k \) \Comment{\emph{The companion process stops jumping and \( N' = N \)}}
\Break \Comment{\emph{Since the companion process is now deterministic}}
\EndIf
\If{\( p^*  \le W \le p_U \)} 
\State \( N \gets k \) \Comment{\emph{The companion process stops jumping but \( N \neq N' \)}}
\State \( N' \gets k + \mathrm{Geometric}(p^*) \) \Comment{\emph{Simulate \( N' \ge N \)}}
\State \( V_k, \dots, V_{N'} \gets \mathrm{Exp}(\tau) \)
\Break \Comment{\emph{Since the companion process is now deterministic}}
\EndIf
\If{\( p_{\CompL} \le W \)} \Comment{\emph{The companion process  has another jump}}
    \State  \( S \gets \mathrm{Unif}([0,1]) \) 
    \State \( T \gets F^-(S) \)
    \Comment{\emph{\( F^- \) denotes the generalized inverse of \eqref{eq_:_fct_repartition_T_conditionnelle_T_fini_et_u0}}}
    \newline
    \Comment{\emph{Draw the waiting time before the jump}}
    \State \( V \gets \tau^{-1} \ln(1 + S)\) \Comment{\emph{Draw an exponential variable of parameter \( \tau \)}}
    \State \( T_k \gets T \), \( V_k \gets V \) \Comment{\emph{Record variables. By construction \( V_k \ge T_k \)}}
    \State \( U \gets U e^{- \mindun T } + \mathrm{Exp}(1) \) \Comment{\emph{Update the value of the companion process}}
    \State \( p_{U} \gets e^{- \lipnorm \roverdun U} \mathds{1}_{U \le 1 \slash \lipnorm} + \left( \lipnorm U \right)^{-\roverdun} e^{- \roverdun} \mathds{1}_{U > 1 \slash \lipnorm}\)
    \newline
    \Comment{\emph{Update the probability for next waiting time to have an infinite value}}
\State $T_{\mathrm{current}} \gets T_{\mathrm{current}} + T$ \Comment{\emph{Update current simulation time}}
\State \( k \gets k + 1 \) \Comment{\emph{Update jump counter}}
\EndIf
\EndWhile
\State \Return \( U_{\mathrm{old}} e^{- \mindun (t - T_{\mathrm{old}})} \) 
\end{algorithmic}
\end{algorithm}
 
\begin{proposition} \label{prop : maj probability generating last jump instant}
For \( s \in [0, p^* \tau] \), the probability-generating function of \( H \) verifies
\[
\fctgeneh(s) = \Esp \left[ e^{sH} \right] \le \dfrac{\tau - s}{p^* \tau - s}. 
\]
\end{proposition}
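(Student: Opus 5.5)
Using the coupling built by \cref{algo1}, the plan is to dominate $H$ almost surely by a sum of i.i.d.\ exponential variables whose number is an independent geometric variable, and then compute the probability-generating function of that sum. In the construction of \cref{algo1}, each finite waiting time $T_k$ is paired with $V_k$, distributed as $\Exp(\tau)$, such that $T_k \le V_k$ almost surely; this uses \cref{prop : maj sto finite waiting times} and inverse transform sampling with the same uniform $S$. Likewise $N \le N'$ with $N' \sim \mathrm{Geometric}(p^*)$ by \cref{prop_:_maj_sto_nbre_saut_U(t)}. Moreover $N', V_1, V_2, \dots$ are mutually independent, because they are built from independent uniforms and their laws do not depend on the current state of $\Comp$. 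Consequently
\[ H = \sum_{i=1}^{N} T_i \le \sum_{i=1}^{N} V_i \le \sum_{i=1}^{N'} V_i =: H' \quad \text{a.s.}, \]
and since $s \ge 0$, $\fctgeneh(s) \le \Esp[e^{sH'}]$.

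Next we condition on $N'$ and use independence. For $s < \tau$ this gives $\Esp[e^{sH'} \mid N' = k] = \left(\frac{\tau}{\tau - s}\right)^k$. Summing over $k$ with $\Prob(N' = k) = p^*(1-p^*)^k$ yields
\[ \Esp[e^{sH'}] = \sum_{k \ge 0} p^* \left( \frac{(1-p^*)\tau}{\tau - s} \right)^k = \frac{p^*(\tau - s)}{\tau - s - (1-p^*)\tau} = \frac{p^*(\tau - s)}{p^*\tau - s}. \]
The geometric series converges exactly when $s < p^*\tau$. Since $p^* \le 1$ and $s \ge 0$, this quantity is at most $\frac{\tau - s}{p^*\tau - s}$, which is the claimed bound. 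Alternatively, one can observe directly that a geometric sum of exponentials is exponential with parameter $p^*\tau$, or a Dirac mass at $0$ with probability $p^*$. At the endpoint $s = p^*\tau$ the right-hand side is $+\infty$, so the inequality holds trivially there.

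The main obstacle is the independence used in the second step. The $T_i$ are genuinely dependent on one another and on $N$ through the post-jump values of $\Comp$, so the Wald-type factorisation is only legitimate for the dominating variables $(N', V_i)$. I will therefore check carefully that in \cref{algo1} the $V_k$ depend only on the fresh uniforms $S$, and that $N'$ depends only on the uniforms $W$ together with the extra independent geometric draw. In particular, the event $\{W \le p_U\}$ must be handled so that $N'$ remains $\mathrm{Geometric}(p^*)$ irrespective of the path. This holds because each step stops with $N'$ at probability exactly $p^*$ via the threshold $W \le p^*$, using the same $W$ whose threshold $p_U \ge p^*$ governs the actual stopping of $\Comp$.
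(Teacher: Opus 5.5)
Your argument is the paper's own: you dominate $H$ by $\sum_{i=1}^{N'}V_i$ via \cref{algo1}, then take the generating function of a geometric sum of independent $\Exp(\tau)$ variables, and that computation is correct. The genuine gap is exactly the point you single out as the main obstacle: the claim that at every step the true stopping threshold satisfies $p_{\CompL}\ge p^*$.

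Write $\pi(u_0)=\Prob(T=+\infty\mid \CompL(0)=u_0)$ as in \eqref{eq_:_proba_temps_attente_infini_conditionnellement_valeur_init}. Then $p_{\CompL}=\pi(\CompL)$ and $p^*=\pi(u)$ with $u=\Esp[\CompL(0)]\vee\roverdun$. Since $\pi$ is decreasing with limit $0$, $p_{\CompL}\ge p^*$ holds iff $\CompL\le u$. A post-jump value $\CompL e^{-\mindun T}+\Exp(1)$ exceeds $u$ with positive probability. On such steps $\{W\le p^*\}\not\subset\{W\le p_{\CompL}\}$, so you cannot both give $\Comp$ its correct law and guarantee $N\le N'$ with $N'$ geometric and independent of the $V_i$.

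Invoking \cref{prop_:_maj_sto_nbre_saut_U(t)} does not help. Its proof applies Jensen with $\Esp[\CompL(t)]\le u$, which is valid at deterministic times. The coupling instead needs a lower bound on the stopping probability \emph{conditionally on the past}, i.e.\ at post-jump values. Also, a marginal domination of $N$ would not by itself give the joint independence you use. \cref{algo1} makes the same implicit assumption, so this gap is inherited from the paper rather than introduced by you.

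This is not a technicality. Take $n=1$, $\mindun=r=1$, $\lipnorm=0.1$, $\CompL(0)=1$, so $\roverdun=\tau=u=1$ and $p^*=e^{-0.1}\approx0.905$. Then $\Prob(N=0)=p^*$, but
$\Prob(T_2=+\infty\mid T_1<+\infty)\approx\frac{0.1\,e^{-0.1}}{1-e^{-0.1}}\cdot\frac{1}{1.1}\approx0.864<p^*$.
Post-jump values above $1/\lipnorm$ change this by less than $10^{-4}$. Hence $\Prob(N\ge2)\approx0.0129>(1-p^*)^2\approx0.0091$, so even the marginal claim of \cref{prop_:_maj_sto_nbre_saut_U(t)} fails.

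Moreover, as long as $\CompL\le1/\lipnorm=10$, the companion process coincides with a subcritical continuous-state branching process. For that process $\Prob(H>t)=1-e^{-\CompL(0)v_t}$ with $v_t\sim Ce^{-(\mindun-r\lipnorm)t}=Ce^{-0.9t}$. The cap only acts on rare excursions above $10$, so $\Esp[e^{sH}]$ should already be infinite for $s$ slightly above $0.9$. That is below $p^*\tau\approx0.905$, where the claimed bound is finite.

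This indicates that the inequality itself fails near $s=p^*\tau$. A correct version would need a genuinely different control of the post-jump values of $\Comp$ (e.g.\ a drift condition for its embedded jump chain) and a correspondingly smaller parameter in place of $p^*$.
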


\begin{proof}[Proof of \cref{prop : maj probability generating last jump instant}]
With the definition of \( H \) we have
\[
\fctgeneh(s) = \Esp \left[ e^{sH} \right] = \Esp \left[ \prod_{i=1}^N e^{sT_i} \right] \le \mathbb{E} \left[ \prod_{i=1}^{N'} e^{sV_i} \right],
\]
where \( N' \), \(V_1, \dots, V_{N'} \) are constructed in \cref{algo1}. Thus 
\[ G(s) \le \Esp \left[ \left( \frac{\tau}{\tau - s} \right)^{N'} \right] = \sum_{k=0}^{+ \infty} \left( \frac{\tau}{\tau - s} \right)^k \Prob( N' = k) = \frac{p^* (\tau - s)}{p^* \tau - s}. \]
\end{proof}  

\begin{proposition} \label{prop : maj expectation companion process}
For all \( t \ge 0 \) we have
\[
\mathbb{E} \left[ \CompL(t) \right] \le \left( u + \big(\mathbb{E}[\CompL(0)] + \tau (1 - p^*)\big) \frac{p^* \tau \mindun}{e (p^* \tau + \mindun)} t \right) \exp \left( - \frac{p^* \tau \mindun}{p^* \tau + \mindun} t \right),
\]
where \( p^* \) is defined at \eqref{eq_:_proba_arret_des_sauts_processus_compagnon}, \( u = \mathbb{E}[\CompL(0)] \vee \roverdun \), and \( \tau = r \wedge \mindun \).
\end{proposition}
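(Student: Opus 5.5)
We split according to whether the companion process has stopped jumping by time \( t \). Let \( H \) be the instant of the last jump, as defined after \cref{prop : companion process finite number of jumps}, and write
\[
\Esp[\CompL(t)] = \Esp[\CompL(t)\mathds{1}_{H \le t}] + \Esp[\CompL(t)\mathds{1}_{H > t}].
\]
On \( \{H \le t\} \) there are no jumps after \( H \), so \( \CompL(t) = \CompL(H) e^{-\mindun (t-H)} \). On \( \{H > t\} \) we use the crude control \( \CompL(t) \le \CompL(t) e^{\theta (H-t)} \) for some \( \theta \ge 0 \) to be chosen. In both cases the goal is to produce a factor \( e^{-\theta t} e^{\theta H} \) with \( \theta \le \mindun \), so that the exponential moment bound of \cref{prop : maj probability generating last jump instant} applies.

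To remove the dependence on \( \CompL(H) \), we use a pathwise bound that holds for all \( s \ge 0 \):
\[
\CompL(s) \le \CompL(0) e^{-\mindun s} + \sum_{k \le N} E_k e^{-\mindun (s - S_k)},
\]
where \( S_k \) are the jump times and \( E_k \sim \Exp(1) \) the burst sizes. Combining the two events, \( \CompL(t) e^{\theta(t-H)} \) is controlled by roughly
\[
\bigl(\CompL(0) + \textstyle\sum_{k\le N} E_k\bigr) e^{\theta H}\, e^{-\theta t}.
\]
A cleaner route is probably the following. Before \( H \) we use \cref{prop_:_borne_uniforme_constante_esperance_U(t)}, which gives \( \Esp[\CompL(t)] \le u \). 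After \( H \) there is pure exponential decay at rate \( \mindun \). So
\[
\Esp[\CompL(t)] \le \Esp\bigl[\CompL(t)\, e^{-\mindun (t-H)^+}\bigr]\text{-type bound} \lesssim u\, \Esp\bigl[e^{-\mindun (t-H)^+}\bigr],
\]
up to correlation terms.

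The correlation terms are where the summand \( (\Esp[\CompL(0)] + \tau(1-p^*))\,\gamma t/e \) comes from. We set \( \gamma = p^*\tau\mindun/(p^*\tau+\mindun) \); note \( \gamma \le p^*\tau \) and \( \gamma \le \mindun \). By Markov's inequality and \cref{prop : maj probability generating last jump instant} at \( s=\gamma \),
\[
\Prob(H > t) \le \fctgeneh(\gamma) e^{-\gamma t}, \qquad \fctgeneh(\gamma) = \frac{\tau-\gamma}{p^*\tau-\gamma}.
\]
This choice of \( \gamma \) is where the harmonic-type formula comes from: it balances the decay rate \( \mindun \) after \( H \) against the moment rate \( p^*\tau \) of \( H \). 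With \( \gamma \) in hand:
\begin{itemize}
\item bound \( \Esp[\CompL(t)\mathds{1}_{H\le t}] \) by \( e^{-\gamma t}\,\Esp[\CompL(H) e^{\gamma H}] \);
\item estimate \( \Esp[\CompL(H)e^{\gamma H}] \) via the pathwise bound above, with the expected number of jumps controlled by \( \Esp[N'] = (1-p^*)/p^* \) from \cref{algo1};
\item use \( \sup_{x \ge 0} x e^{-x} = e^{-1} \) to turn a factor \( \gamma t\, e^{-\gamma t} \) (arising from differentiating in the rate, or from \( \Esp[H e^{\gamma H}] \)) into the linear-in-\( t \) prefactor \( \gamma t/e \).
\end{itemize}

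The main obstacle is that \( \CompL(H) \) and \( H \) are dependent and \( \CompL \) is unbounded, so the clean factorization of \citep{Benaim2012} is unavailable. I would handle this with the coupling of \cref{algo1}: bound \( H \le \sum_{i\le N'} V_i \) with the \( V_i \) i.i.d.\ \( \Exp(\tau) \) independent of \( N' \), and bound the jump contributions by independent \( \Exp(1) \) variables. All expectations then become explicit geometric–exponential sums, and the cruder terms are absorbed using \( u = \Esp[\CompL(0)]\vee\roverdun \). Matching the exact constants \( \tau(1-p^*) \) in the statement may require some adjustment at this step.
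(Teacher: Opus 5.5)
Your sketch does not prove the displayed inequality, and the one concrete mechanism in it cannot produce it. Carried out cleanly, your fixed-exponent idea gives the following. For $0\le s\le d_1$ with $s<p^*\tau$ one has pathwise $U(t)\le (U(0)+S)\,e^{s(H-t)}$: on $\{H\le t\}$ because $U(t)=U(H)e^{-d_1(t-H)}$ and $U(H)\le U(0)+S$, and on $\{H>t\}$ trivially. The coupling of \cref{algo1} then yields
\[
\mathbb{E}[U(t)]\le K(s)\,e^{-st},\qquad K(s)=\mathbb{E}[U(0)]\,\frac{p^*(\tau-s)}{p^*\tau-s}+\frac{p^*(1-p^*)\tau(\tau-s)}{(p^*\tau-s)^2}.
\]
This is a valid exponential estimate. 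It even allows rates up to $\min(d_1,p^*\tau)>\gamma$.

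The problem is the constant. As soon as $0<p^*<1$ we have $K(s)>\mathbb{E}[U(0)]$. Since $p^*\tau-\gamma=(p^*\tau)^2/(p^*\tau+d_1)$, $K(\gamma)$ can be far larger than $u$. So this bound does not imply the statement, whose right-hand side equals $u$ at $t=0$ and has only a linear prefactor. Combining it with $\mathbb{E}[U(t)]\le u$ still leaves, in general, a window of times uncovered.

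Your account of the term $\gamma t/e$ does not correspond to any step: $\sup_{x\ge 0}xe^{-x}=e^{-1}$ removes a factor $t$, it does not create one. Likewise, ``$u\,\mathbb{E}[e^{-d_1(t-H)^+}]$ up to correlation terms'' only restates the dependence between $U$ and $H$ without resolving it.

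The idea you are missing is the paper's split at a \emph{deterministic} intermediate time $\beta t$, with $\beta\in(0,1)$.
\begin{itemize}
\item On $\{H<\beta t\}$ one has $U(t)=U(\beta t)e^{-d_1(1-\beta)t}$. Dropping the indicator and applying \cref{prop_:_borne_uniforme_constante_esperance_U(t)} at the fixed time $\beta t$ gives $u\,e^{-d_1(1-\beta)t}$, with no correlation issue at all. This is where the leading coefficient $u$ comes from.
\item On $\{H\ge\beta t\}$ the paper uses $\mathds{1}_{\{H\ge\beta t\}}\le e^{s(H-\beta t)}$ with a $t$-dependent exponent $s=p^*\tau-(\beta t)^{-1}$, the minimizer of $e^{-s\beta t}\tau/(p^*\tau-s)$. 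The bound on $\mathbb{E}[(U(0)+S')e^{sH}]$ via \cref{algo1} is common to both approaches. Since then $\tau/(p^*\tau-s)=\tau\beta t$, this choice is what produces the factor linear in $t$ (together with a power of $e$) at rate $p^*\tau\beta$.
\item Taking $\beta=d_1/(p^*\tau+d_1)$ equalizes $d_1(1-\beta)=p^*\tau\beta=\gamma$. That is the balancing you allude to, but it only materializes through this split.
\end{itemize}

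Your unease about the constants is nonetheless justified, even along the paper's route. Computed exactly, $\mathbb{E}[N'(\tau/(\tau-s))^{N'}]=p^*(1-p^*)\tau(\tau-s)/(p^*\tau-s)^2$, whereas the paper's display has the first power. The optimization gives the factor $e\,\gamma t$ of \eqref{eq_:_majoration_esperance_2terme}, not $\gamma t/e$. Moreover, the chosen $s$ is nonnegative only for $t\ge 1/\gamma$. So the precise prefactor deserves rechecking. Still, the shape $(u+\text{polynomial in }t)\,e^{-\gamma t}$ comes from the $\beta t$-split, which your proposal does not contain.
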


\begin{proof}[Proof of \cref{prop : maj expectation companion process}]\label{proof_:_maj_expectation_companion_process}
Let \( \beta \in ]0,1[ \). For \( t \ge 0 \) the expectation of the companion process can be decomposed as follow
\[
\mathbb{E} \left[ \CompL(t) \right] = \mathbb{E} \left[ \CompL(t) \mathds{1}_{\{H < \beta t\}} \right] + \mathbb{E} \left[ \CompL(t) \mathds{1}_{\{H \ge \beta t\}} \right].
\]
If \( H < \beta t \), the companion process has stopped jumping and its expectation can be controlled using \cref{prop_:_borne_uniforme_constante_esperance_U(t)}
\begin{equation}\label{eq_:_majoration_esperance_1terme}
\Esp \left[ \CompL(t) \mathds{1}_{\{H < \beta t\}} \right] \le \Esp \left[ \CompL(\beta t) e^{- \mindun ( t - \beta t)} \mathds{1}_{\{H < \beta t\}} \right] \le \left( \mathbb{E}[\CompL(0)] \vee \roverdun \right) e^{- \mindun ( 1 - \beta)t}. 
\end{equation}
For any \( s \in [0, p^* \tau ] \), the second term verifies
\[ \mathbb{E} \left[ \CompL(t) \mathds{1}_{\{H \ge \beta t\}} \right] = \mathbb{E} \left[ \CompL(t) \mathds{1}_{\{H \ge \beta t\}} e^{-sH} e^{sH} \right] \le e^{-s\beta t} \mathbb{E} \left[ e^{sH} \CompL(t) \right]. 
\]
Naively, \( \CompL(t) \) is upper bounded by the sum of its initial value and all its jumps. Namely
\[ \CompL(t) \le \CompL(0) + S, \quad \text{where} \quad S = \sum_{i=1}^N E_i, \]
with \(E_1, \dots, E_N \) being the jumps of \( \Comp \).
Thus
\[ \mathbb{E}[\CompL(t) e^{sH}] \le \mathbb{E}[(\CompL(0) + S) e^{sH}]. \] 
The key is now to replace random variables in the previous equation by other random variables that are almost surely greater and mutually independent. Indeed in \cref{algo1} we constructed (simultaneously)\( N' \), \( V_1, \dots, V_{N'} \) mutually independent such that
\[ H \le \sum_{i=1}^{N'} V_i \; \text{a.s.} \]
Here we also introduce 
\[ S' = \sum_{i=1}^{N'} E_i \ge S \; \text{a.s.}, \]
where \( E_{N+1}, \dots, E_{N'} \) are independent exponential random variables of parameter 1. Here it is important to point out that \( N' \), \( S' \) and \(V_1, \dots, V_{N'} \) are independent of \( \CompL(0) \). (It results from the construction made in \cref{algo1}). Finally using the independance and \cref{prop : maj probability generating last jump instant} we get
\[ \mathbb{E}[\CompL(t)e^{sH}] \le \mathbb{E}\left[ (\CompL(0) + S') \prod_{i=1}^{N'}e^{sV_i} \right] \le \mathbb{E}[\CompL(0)] \frac{p^* (\tau - s)}{p^* \tau - s} + \mathbb{E} \left[ S' \prod_{i=1}^{N'} e^{sV_i} \right]. \]
It remains to explicit
\[ \mathbb{E} \left[ S' \prod_{i=1}^{N'} e^{sV_i} \right] = \mathbb{E} \left[ N' \left( \frac{\tau}{\tau - s} \right)^{N'} \right], \]
since conditionally to \( N' \), \( S' \) follows an Erlang distribution of parameter \( (N', 1) \). We have
\[ \mathbb{E} \left[ N' \left( \frac{\tau}{\tau - s} \right)^{N'} \right] =  p^* \sum_{k=1}^{+ \infty} k \left( \frac{(1 - p^*)\tau}{\tau - s} \right)^k = p^* (1 - p^*) \tau \frac{\tau - s}{p^* \tau - s}, \]
and finally
\[ \mathbb{E}[\CompL(t) e^{sH}] \le \frac{p^* (\tau - s)}{p^* \tau - s} \left( \mathbb{E}[\CompL(0)] + \tau (1 - p^*) \right). \]
Since for all \( 0 < s < p^* \tau \)
\[ e^{-s \beta t} \dfrac{\tau - s}{p^* \tau - s} \le e^{-s \beta t} \left( \dfrac{\tau}{p^* \tau -s} \right), \]
and because the application 
\[ h(s) = \ln \left( \dfrac{\tau}{p^* \tau - s} \right) - s \beta t, \]
reachs its minimum for \( s = p^* \tau - (\beta t)^{-1} \), we can optimise the bound and obtain
\begin{equation} \label{eq_:_majoration_esperance_2terme}
\mathbb{E} \left[ \CompL(t) \mathds{1}_{\{H \ge \beta t\}} \right] \le p^* \tau \beta t e \left( \mathbb{E}[\CompL(0)] + \tau (1-p^*) \right) e^{- p^* \tau \beta t}.
\end{equation}
Adding both \eqref{eq_:_majoration_esperance_1terme} and \eqref{eq_:_majoration_esperance_2terme}, we get
\[
\mathbb{E} \left[ \CompL(t) \right] \le (\mathbb{E}[\CompL(0)] \vee \roverdun) e^{- \mindun (1- \beta) t} + p^* \tau \beta t e (\mathbb{E}[\CompL(0)] + \tau (1 - p^*)) e^{- p^* \tau \beta t - 1}.
\]
Finallly set \( \beta = \frac{\mindun}{p^* \tau + \mindun} \) to obtain the maximal convergence rate which factorize both exponential
\[
\mathbb{E} \left[ \CompL(t) \right] \le \left( u + \frac{p^* \tau \mindun}{e (p^* \tau + \mindun)} (\mathbb{E}[\CompL(0)] + \tau (1 - p^*)) t \right) \exp \left( - \frac{p^* \tau \mindun}{p^* \tau + \mindun} t \right),
\]
where \( u = \mathbb{E}[\CompL(0)] \vee \roverdun \).
\end{proof}

\subsection{The companion process in the case of the MP model} \label{subsection_:_processus_compagnon_modele_MP}

In this section, we construct, as in \cref{subsection_:_processus_compagnon_P_modele}, a coupling 
\[ (Y^{1}(t), Z^{1}(t), Y^{2}(t), Z^{2}(t), \CompL(t))_{t \ge 0} \in (\R_+)^n \times (\R_+)^n \times (\R_+)^n \times (\R_+)^n \times \R_+ \]
of two copies of \cref{model : Mathematical Bursty Model} and the companion process. The resulting coupling is also a Markov process that rests on the same idea as the one at \eqref{eq : generator (X1, X2, U) n genes}.

\begin{align} 
L_{\text{MP-MP-U}}f = & \sum_{i=1}^n \bigg( d_{0,i} \left( \degradop_{y_i^1} + \degradop_{y_i^2} \right) + d_{1,i} \left( \creatop_{z_i^1}^{y_i^1} + \degradop_{z_i^1} + \creatop_{z_i^2}^{y_i^2} + \degradop_{z_i^2} \right) \nonumber  \\
& + \commonjumprate{i} \burstop_{y_i^1, y_i^2}^{\rho_i, \rho_i} + \jumpratecone{i} \burstop_{y_i^{1}, u}^{\rho_i} + \jumpratectwo{i} \burstop_{y_i^{2}, u}^{\rho_i} \bigg) \nonumber \\
& + \mindun \degradop_u + \lambda_{\text{U}} \burstop_u, \label{eq_:_generator_MP_MP_U}
\end{align}
where
\begin{align*}
\commonjumprate{i}(y^1, z^1, y^2, z^2, u) & = \koni(z^1) \wedge \koni(z^2), \\
\jumpratecone{i}(y^1, z^1, y^2, z^2, u) & = (\koni(z^1) - \koni(z^2))^+, \\
\jumpratectwo{i}(y^1, z^1, y^2, z^2, u) & = (\koni(z^1) - \koni(z^2))^-, \\
\lambda_{\text{U}}(y^1, z^1, y^2, z^2, u) & = r \left( 1 \wedge \lipnorm \CompSmL \right), \\
\rho_i & = \epsi{i}^{-1}.
\end{align*}

\bigskip

In this case, in order to bound the difference between proteins at all times, the companion process must initially verify a slightly stronger property.
\begin{proposition} \label{prop : maj diff MP-MP}
If the inequality 
\begin{equation} \label{eq : maj diff MP-MP}
\Vert Z^{1}(t) - Z^{2}(t) \Vert +  \weightnorm{Y^{1}(t) - Y^{2}(t)}  \le \CompL(t) 
\end{equation}
is verified at time \( t = 0 \), then it is verified for all \( t \ge 0 \).
\end{proposition}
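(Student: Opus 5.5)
The plan mirrors the proof of \cref{prop : maj diff P-P}. I would show that the quantity
\[ D(t) = \Vert Z^{1}(t) - Z^{2}(t) \Vert + \weightnorm{Y^{1}(t) - Y^{2}(t)} \]
is dominated by \( \CompL(t) \) along the deterministic flow, and that every possible jump of the coupling \eqref{eq_:_generator_MP_MP_U} preserves the inequality. By the strong Markov property it suffices to treat one inter-jump interval and one jump. Without loss of generality the interval starts at time \( 0 \), where we assume \( D(0) \le \CompL(0) \) (or \( D(0^-) \le \CompL(0^-) \) when a jump occurs at \( 0 \)).

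\emph{Flow.} Set \( \delta y_i = Y^1_i - Y^2_i \) and \( \delta z_i = Z^1_i - Z^2_i \). The ODE is linear, so by \eqref{eq_:_sol_analytique_EDO_modele_MP} the differences satisfy, before the next jump,
\[ \delta y_i(t) = \delta y_i(0) e^{-d_{0,i} t}, \qquad \delta z_i(t) = \delta z_i(0) e^{-d_{1,i}t} + \epsi{i}\, \delta y_i(0) \left( e^{-d_{1,i}t} - e^{-d_{0,i}t} \right). \]
\cref{hyp : degradation rates} gives \( e^{-d_{1,i}t} - e^{-d_{0,i}t} \ge 0 \). Applying the triangle inequality coordinatewise then yields
\[ \vert \delta z_i(t) \vert + \epsi{i} \vert \delta y_i(t) \vert \le \vert \delta z_i(0)\vert e^{-d_{1,i}t} + \epsi{i} \vert \delta y_i(0) \vert \left( e^{-d_{1,i}t} - e^{-d_{0,i}t} + e^{-d_{0,i}t} \right) = \left( \vert \delta z_i(0)\vert + \epsi{i}\vert \delta y_i(0)\vert \right) e^{-d_{1,i}t}. \]
This cancellation is the key point, and it is exactly why the weight \( \epsi{i} \) appears in \( \weightnorm{\cdot} \). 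Summing over \( i \) and using \( d_{1,i} \ge \mindun \) gives \( D(t) \le D(0) e^{-\mindun t} \le \CompL(0) e^{-\mindun t} = \CompL(t) \) between jumps.

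\emph{Jumps.} I would go through the three kinds of events in \eqref{eq_:_generator_MP_MP_U}. Protein coordinates are continuous at every jump, so only the \( \weightnorm{\cdot} \) part of \( D \) can change.
\begin{itemize}
\item A common burst \( \burstop_{y_i^1,y_i^2}^{\rho_i,\rho_i} \) adds the same amount \( \rho_i E \) to \( Y^1_i \) and \( Y^2_i \). Hence \( \delta y_i \) and \( D \) are unchanged, and \( \CompL \) does not jump.
\item An autonomous burst of \( \CompL \) (rate \( \lambda_{\mathrm U} \)) only increases \( \CompL \) and leaves \( D \) unchanged.
\item A unilateral burst, say of \( Y^1_i \), adds \( \rho_i E \) to \( Y^1_i \) with \( E \sim \Exp(1) \). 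Under the reading of \( \burstop_{y_i^1,u}^{\rho_i} \) in which the omitted parameter of \( u \) equals one, it simultaneously adds \( E \) to \( \CompL \). By the triangle inequality the weighted norm increases by at most \( \epsi{i}\rho_i E = E \), which is exactly the jump of \( \CompL \). Hence \( D(0^+) \le D(0^-) + E \le \CompL(0^-) + E = \CompL(0^+) \).
\end{itemize}
The case of a jump of \( Y^2_i \) only is symmetric. Combining these cases with the flow estimate on the following interval and inducting over the successive jump times gives \eqref{eq : maj diff MP-MP} for all \( t \ge 0 \). Non-explosion, from \cref{hyp : kon bounded and Lipschitz}, guarantees that these jump times exhaust \( [0,\infty) \).

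The only step I expect to need care is the flow step. Unlike the P-model, \( \Vert Z^1 - Z^2 \Vert \) alone is not contracted by the flow, because the mRNA difference feeds into the proteins. Choosing the weights \( \epsi{i} = d_{1,i}/(d_{0,i}-d_{1,i}) \) makes the \( e^{-d_{0,i}t} \) terms cancel exactly, and this relies on \( d_{0,i} > d_{1,i} \). A secondary check is the normalisation of the unilateral jump: the \( \CompL \)-increment must equal \( \epsi{i}\rho_i = 1 \) times the underlying exponential variable. This is how I read the burst operator's parameters, and it is consistent with \( \CompL \) having the same marginal dynamics as in the P-model.
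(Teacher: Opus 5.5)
Your proposal is correct and follows essentially the same route as the paper: the flow step relies on the exact cancellation of the $e^{-d_{0,i}t}$ terms produced by the weights $\epsi{i}$ (using $d_{0,i} > d_{1,i}$), and the jump step separates common bursts, where $Y^1_i - Y^2_i$ is unchanged and $\CompL$ does not jump, from unilateral bursts, where the increase $\epsi{i}\rho_i H = H$ of the weighted norm is exactly matched by the jump of $\CompL$. The only difference is cosmetic: you handle the jump instant and the subsequent flow separately, while the paper directly bounds the post-jump trajectory up to the next jump; your extra remarks (autonomous bursts of $\CompL$, and non-explosion so that the jump times exhaust $[0,\infty)$) only make explicit what the paper leaves implicit.
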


\begin{proof}[Proof of \cref{prop : maj diff MP-MP}]
Assuming \cref{eq : maj diff MP-MP} is verified for \( t = 0 \), then for \( t \ge 0 \) before the next jump we have
\begin{align*}
\Vert Z^{1}(t) - Z^{2}(t) \Vert \le & \sum_{j=1}^n \vert Z_j^{1}(0) - Z_j^{2}(0) \vert e^{-d_{1,j} t} \\
& + \epsi{j} \vert Y_j^{1}(0) - Y_j^{2}(0)) \vert ( e^{-d_{1,j} t} - e^{-d_{0,j} t} )
\end{align*}
and
\[
\weightnorm{ Y^{1}(t) - Y^{2}(t) } = \sum_{j=1}^n \epsi{j} \vert Y_j^{1}(0) - Y_j^{2}(0) \vert e^{-d_{0,j} t}.
\]
By adding both terms, we get
\begin{align*}
\Vert Z^{1}(t) - Z^{2}(t) \Vert + & \weightnorm{ Y^{1}(t) - Y^{2}(t) } \\ 
\le & e^{ - \mindun t} \sum_{i=1}^n \Big( \vert Z_j^{1}(0) - Z_j^{2}(0) \vert + \epsi{j} \vert Y_j^{1}(0) - Y_j^{2}(0) \vert \Big) \\
\le & \CompL(t).
\end{align*}
This proves that the bound is preserved through deterministic phases. We will now show that the bound is preserved when a jump occurs. We thus need to precise that the hypothesis is now 
\[ \Vert Z^{1}(0) - Z^{2}(0) \Vert + \weightnorm{Y^{1}(0^-) - Y^{2}(0^-)} \le \CompL(0^-). \] 
Note that \( (Z(t))_{t \ge 0} \) has continuous trajectories so, 
\[Z(t) = Z(t^+) = Z(t^-), \]
for all \( t \ge 0 \). This is why, for this process, we will keep the notation \( Z(t) \). First, we consider the event of simultaneous jumps, which can be assumed to occur at time 0 without loss of generality. That is, for some \( i \in \seg{n} \), 
\[ Y^1_i(0^+) = Y^1_i(0^-) +  \epsi{i}^{-1} H , \quad Y^2_i(0^+) = Y^2_i(0^-) + \epsi{i}^{-1} H \quad \text{where} \quad H \sim \Exp(1). \]
In the case of such an event, the other coordinates will remain the same and the companion process won't jump. Namely for every \( j \neq i \), 
\[ Y^1_j(0^+) = Y^1_j(0^-), \quad Y^2_j(0^+) = Y^2_j(0^-), \quad \text{and} \quad \CompL(0^+) = \CompL(0^-). \]
Thus, for every \( t \ge 0 \) before the next jump, we have
\begin{align*}
\Vert Z^1(t) - Z^2(t) \Vert +  \weightnorm{Y^1(t) & - Y^2(t)} \\
& \le e^{- \mindun t} \left( \Vert Z^1(0) - Z^2(0) \Vert + \weightnorm{ Y^1(0^-) - Y^2(0^-)} \right) \\
& \le \CompL(t).
\end{align*} 
Similarly, in the case of unilateral jumps, we assume that for some \( i \in \seg{n} \),
\[ Y_i^{1}(0^+) = Y_i^1(0^-) +  \epsi{i}^{-1} H \quad \text{where} \quad H \sim \Exp(1). \]
In that case, 
\[ \CompL(0^+) = \CompL(0^-) + H, \]
and all other coordinates remain the same.
For \( t \ge 0 \) until the next jump we have
\begin{align*}
\Vert Z^{1}(t) - Z^{2}(t) \Vert = & \left( \sum_{j \neq i} \vert Z^{1}_j(t) - Z^2_j(t) \vert \right) + \Big| (Z_i^{1}(0) - Z_i^{2}(0)) e^{-d_{1,i} t} \\
& + \epsi{i} (Y_i^{1}(0^-) + \epsi{i}^{-1} H - Y_i^{2}(0^-))( e^{-d_{1,i} t} - e^{-d_{0,i} t}) \Big| \\
\le & \left( \sum_{j \neq i}  \vert Z^{1}_j(t) - Z^2(t) \vert \right) + \vert (Z_i^{1}(0) - Z_i^{2}(0)) \vert e^{-d_{1,i} t} \\
& + \epsi{i} \vert Y_i^{1}(0^-) - Y_i^{2}(0^-) \vert ( e^{-d_{1,i} t} - e^{-d_{0,i} t} ) \\
& + H (e^{d_{1,i} t} - e^{-d_{0,i} t}) 
\end{align*} 
and
\begin{align*}
\weightnorm{ Y^{1}(t) - Y^{2}(t) } \le  \sum_{j=1}^n \left( \epsi{j} \vert Y_j^{1}(0^-) - Y_j^{2}(0^-) \vert e^{-d_{0,j} t} \right) + H e^{-d_{0,i} t}.
\end{align*}
Adding the two terms together gives us
\begin{align*}
\Vert Z^{1}(t) - & Z^{2}(t)  \Vert + \weightnorm{ Y^{1}(t) - Y^{2}(t) } \\
& \le \sum_{j=1}^n \Big( \vert Z_j^{1}(0) - Z_j^{2}(0) \vert e^{-d_{1,j} t} + \epsi{j} \vert Y_j^{1}(0^-) - Y_j^{2}(0^-) \vert e^{-d_{1,j} t} \Big) + H e^{-d_{1,i} t} \\
& \le \CompL(t).
\end{align*}
With the same use of the triangle inequality, we show that the bound is also preserved when only a coordinate of \( (Y^2(t))_{t \ge 0} \) is impacted by a jump. 
\end{proof}

\subsection{Proof of final results}\label{subsection_:_preuve_resultat_final_quantitative_ergodicity}

In this section, we prove \cref{thm : bound over wasserstein distance P-P} and \cref{thm : bound over wasserstein distance MP-MP}. These proofs are simply the concatenation of the Wasserstein metric definition and results obtained on the companion process in \cref{subsection_:_etude_processus_compagnon}. 

\begin{proof}[Proof of \cref{thm : bound over wasserstein distance P-P}]
We consider the coupling \( (X^1(t), X^2(t), \CompL(t))_{t \ge 0} \) with infinitesimal generator defined at \eqref{eq : generator (X1, X2, U) n genes}. We set
\[ \CompL(0) = \Vert X^1(0) - X^2(0) \Vert. \]
According to \cref{prop : maj diff P-P} 
\[ \Vert X^1(t) - X^2(t) \Vert \le \CompL(t), \]
for \( t \ge 0 \). Thus, with notation of \cref{def_:_notation_measures}, we have
\[ W_1(\mu^1(t), \mu^2(t)) \le \Esp\left[ \Vert X^1(t) - X^2(t) \Vert \right] \le \Esp[\CompL(t)], \]
since the Wasserstein metric is defined as an infimum. With \cref{prop : maj expectation companion process} we have
\begin{equation}\label{eq_:_preuve_thm_2.2}
\Esp[\CompL(t)] \le \left( \Esp[\CompL(0)] \vee \roverdun + \big(\mathbb{E}[\CompL(0)] + \tau (1 - p^*) \big) e^{-1} \gamma t \right) \exp \left( - \gamma t \right),
\end{equation}
where 
\[ \gamma = \frac{p^* \tau \mindun}{p^* \tau + \mindun}\quad \text{and} \quad p^* \text{ is defined in \cref{prop_:_maj_sto_nbre_saut_U(t)}}. \] 
Since \( \gamma \) is a non-decreasing function of \( p^* \) and \( p^* \) is itself a non-increasing function of \( \Esp[\CompL(0)] \), to optimise \( \gamma \) we choose 
\[ \Esp[\CompL(0)] = W_1(\mu^1(0), \mu^2(0)). \]
Note that this choice also optimise the coefficients of the polynom in \eqref{eq_:_preuve_thm_2.2}.
\end{proof}

Similarly we prove the result for \cref{model : Mathematical Bursty Model}.
\begin{proof}[Proof of \cref{thm : bound over wasserstein distance MP-MP}]
Consider the coupling \( (Y^1(t), Z^1(t), Y^2(t), Z^2(t))_{t \ge 0} \) with infinitesimal generator \eqref{eq_:_generator_MP_MP_U}. We set
\[ \CompL(0) = \Vert Z^1(0) - Z^2(0) \Vert + \weightnorm{ Y^1(0) - Y^2(0) }. \]
As in the previous proof, with notation in \cref{def_:_notation_measures}, we get 
\[ W_1(\nu^1(t), \nu^2(t)) \le \left( \Esp[\CompL(0)] \vee \roverdun + \big(\mathbb{E}[\CompL(0)] + \tau (1 - p^*)\big) e^{-1} \gamma t \right) \exp \left( - \gamma t \right), \]
with 
\[ \gamma = \frac{p^* \tau \mindun}{p^* \tau + \mindun} \]
and the optimal choice
\[ \Esp[\CompL(0)] = W_1(\nu^1(0), \nu^2(0)) + \widetilde{W}_1(\eta^1(0), \eta^2(0)), \]
where \( \widetilde{W}_1 \) stands for the Wasserstein distance on \( (\R^n, \weightnorm{\cdot}) \).
\end{proof}

\clearpage

\phantomsection
\section*{Declarations}
\addcontentsline{toc}{section}{Declarations}

\phantomsection
\addcontentsline{toc}{subsection}{Code Availability}
\paragraph{Code Availability.}\hspace{-4mm}
An implementation of the coupling algorithm in Python is available at \url{https://github.com/ulysseherbach/grn-couplings} along with examples.

\phantomsection
\addcontentsline{toc}{subsection}{Acknowledgements}
\paragraph{Acknowledgements.}\hspace{-4mm}
The second author is very grateful to Asmaa Labtaina for preliminary work on this subject.

\phantomsection
\addcontentsline{toc}{subsection}{Funding}
\paragraph{Funding.}\hspace{-4mm}
This work was supported in part by the Programmes et Équipements Prioritaires de Recherche (PEPR) Santé Numérique under Project 22-PESN-0002.

\appendix

\section{Appendices}

\begin{proof}[Proof of \cref{prop : maj sto finite waiting times}]
First note that if \( \CompSmL_0 = 0 \) then \( T = + \infty \) almost surely. Set \( t \ge 0 \), for \( \CompSmL_0 > 0 \) we have 
\[ \Prob(T \le t \vert T < + \infty, \CompL(0) = \CompSmL_0) = \frac{\Prob(T \le t \vert \CompL(0) = \CompSmL_0)}{1 - \Prob(T = + \infty \vert \CompL(0) = \CompSmL_0)}. \]
Recall that
\[ \Prob( T > t \vert \CompL(0) = \CompSmL_0) = \exp \left( - r \int_0^t 1 \wedge \lipnorm \CompSmL_0 e^{- \mindun s} \mathrm{d}s \right) \quad \text{and} \quad t^* = \frac{1}{\mindun} \ln (\lipnorm \CompSmL_0). \]
The expression of the function is fully explicit but depends of the expression of the integral and can be written conditionally to the value of \( \CompSmL_0 \) and \( t^* \) (which is a function of \( \CompSmL_0 \)). We have
\begin{align}
\Prob(T \le t \vert T < + \infty, \CompL(0) = \CompSmL_0) = & \frac{1 - e^{- \lipnorm \roverdun \CompSmL_0(1 - e^{- \mindun t})}}{1 - e^{- \lipnorm \roverdun \CompSmL_0}} \mathds{1}_{\{ \CompSmL_0 \le 1 \slash \lipnorm\}} \nonumber \\
& + \frac{1 - e^{-rt^*} e^{- \roverdun (1 - e^{- \mindun(t - t^*)})}}{1 - e^{-rt^*} e^{-\roverdun}} \mathds{1}_{\{ \CompSmL_0 > 1 \slash \lipnorm \}} \mathds{1}_{\{ t > t^* \}} \nonumber\\
& + \frac{1 - e^{-rt}}{1 - e^{-rt^*} e^{- \roverdun}} \mathds{1}_{\{ \CompSmL_0 > 1 \slash \lipnorm \}} \mathds{1}_{\{ t \le t^* \}}
\label{eq_:_fct_repartition_T_conditionnelle_T_fini_et_u0}
\end{align}

\bigskip

First consider the case \( \CompSmL_0 < 1 \slash \lipnorm \). For any positive constant \( C > 0\) and \( t \ge 0 \), the application 
\[ t \longmapsto \frac{e^{Ce^{-\mindun t}} - 1}{e^C - 1} - e^{- \mindun t}, \] 
is non-positive. Thus 
\[ 1 - \frac{1 - e^{- \lipnorm \roverdun \CompSmL_0(1 - e^{- \mindun t})}}{1 - e^{- \lipnorm \roverdun \CompSmL_0}} \ge 1 - e^{- \mindun t} \ge 1 - e^{- \tau t}. \]

\bigskip

When \( \CompSmL_0 < 1 \slash \lipnorm \) and \( t \le t^* \), we easily have
\[ \frac{1 - e^{-rt}}{1 - e^{-rt^*} e^{- \roverdun}} \ge 1 - e^{-rt} \ge 1 - e^{- \tau t}. \]

\bigskip

The last case requires more calculus but the idea is the same as before : when the jump rate is constant, the waiting time is stochastically bounded by the exponential distribution of parameter \( r \) and when it is in the decreasing phase, it is bounded by the exponential distribution of parameter \( \mindun \). Thus here it is natural to show that the waiting time a stochastically bounded by a mixture of those two distributions with density
\[ g(s) = A^{-1} \left( re^{-rs} \mathds{1}_{[0,t^*]}(s) + C \mindun e^{- \mindun s} \mathds{1}_{[t^*, + \infty[}(s) \right) , \]
where
\[ C = \roverdun e^{t^* (\mindun - r)} \quad \text{and} \quad A = 1 - e^{-r t^*}(1 - \roverdun), \]
are respectively the constant such that \( g \) continuous and the normalizing constant. For \( t \ge t^* \) the cumulative distribution function of this distribution is given by
\[ G(t) = A^{-1} \left( (1 - e^{-r t^*}) + \roverdun e^{-r t^*} \left(1 - e^{- \mindun(t - t^*)} \right) \right) = 1 - \frac{\roverdun e^{t^* ( \mindun - r)}}{1 - e^{-r t^*}(1 - \roverdun)} e^{- \mindun t}. \]
To show the stochastic bound, we study the sign of the application
\[ f : t \longmapsto \frac{1 - e^{-rt^*} e^{- \roverdun (1 - e^{- \mindun(t - t^*)})}}{1 - e^{-rt^*} e^{-\roverdun}} - G(t), \]
for all \( t \ge t^* \). Since \( f(t^*) \ge 0 \) and \( f \) tends to zero as \( t \) goes to infinity. Furthermore, the variations of \( f \)  show that it is non-negative. Indeed
\[ f' : t \mapsto \roverdun \mindun e^{- \mindun (t - t^*)} e^{- r t^* } \left( \frac{e^{- \roverdun \left(1 - e^{- \mindun (t - t^*)}\right)}}{1 - e^{-rt^* - \roverdun}} - \frac{1}{1 - e^{-rt^* (1 - \roverdun)}}\right), \]
and only the last term can change the sign of \( f' \). Since this last term is non-negative when \( t = 0 \), tends to a non-positive value when \( t \rightarrow + \infty \) and non increasing, we obtain the result on \( f \) and 
\[ f(t) \ge G(t) \quad \text{for all } t \ge t^*. \]
However this bound still depends of \( \CompSmL_0 \) through \( t^* \) in the expression of \( G \).
Since the application 
\[ s  \longmapsto \frac{\roverdun e^{s (\mindun - r)}}{1 - e^{-rs}(1 - \roverdun)}, \]
is non-increasing on \( \R_+ \), it is greater than its value at \( s = t^* \) and we have
\[ G(t) \ge 1 - \frac{\roverdun e^{-rt}}{1 - e^{-rt}(1 - \roverdun)}. \]
Finally 
\[ \frac{\roverdun e^{-rt}}{1 - e^{-rt}(1 - \roverdun)} \le e^{- \tau t}, \]
since the application 
\[ h : t \longmapsto e^{(r - \tau)t} \left( 1 - e^{-rt}(1 - \roverdun) \right) - \roverdun, \]
is non-positive. As before, we have \( h(0) = 0 \), \( h \) tends to 0 as \( t \rightarrow + \infty \) and the function \( h \) is first non-decreasing and then non-increasing. 
\end{proof}

\clearpage

\phantomsection
\addcontentsline{toc}{section}{References}
\small\bibliography{bibliography}

\end{document}